\definecolor{color1}{HTML}{6E001B}
\definecolor{color2}{HTML}{39E600}
\newcommand{\nocontentsline}[3]{}
\newcommand{\tocless}[2]{\bgroup\let\addcontentsline=\nocontentsline#1{#2}\egroup}
\newtheorem{lemma}{Lemma}[section]
\newtheorem{proposition}[lemma]{Proposition}
\newtheorem{theorem}[lemma]{Theorem}
\newtheorem{corollary}[lemma]{Corollary}
\newtheorem{predefinition}[lemma]{Proto-definition}
\newtheorem*{theoremA}{Theorem}
\theoremstyle{definition}
\newtheorem{definition}[lemma]{Definition}
\newtheorem{remark}[lemma]{Remark}
\newcommand{\mfk}[1]{\mathfrak{#1}}
\newcommand{\mbb}[1]{\mathbb{#1}}
\newcommand{\mcl}[1]{\mathcal{#1}}
\newcommand{\mrm}[1]{\mathrm{#1}}
\newcommand{\msc}[1]{\mathscr{#1}}
\newcommand{\msf}[1]{\mathsf{#1}}
\DeclareMathOperator{\Hom}{Hom}
\DeclareMathOperator{\RHom}{RHom}
\DeclareMathOperator{\REnd}{REnd}
\DeclareMathOperator{\Ext}{Ext}
\DeclareMathOperator{\Tor}{Tor}
\DeclareMathOperator{\Spec}{Spec}
\DeclareMathOperator{\Proj}{Proj}
\DeclareMathOperator{\Supp}{Supp}
\DeclareMathOperator{\supp}{supp}
\DeclareMathOperator{\Sym}{Sym}
\let\oldO\O
\newcommand{\Sing}{\operatorname{Sing}}
\newcommand{\sing}{\operatorname{sing}}
\newcommand{\stab}{\operatorname{stab}}
\newcommand{\Stab}{\operatorname{Stab}}
\newcommand{\ot}{\otimes}
\renewcommand{\O}{\mathscr{O}}
\renewcommand{\hat}{\widehat}
\renewcommand{\tilde}{\widetilde}
\renewcommand{\binom}[2]{{\Small\left(\begin{matrix}\ #1\ \\ #2 \end{matrix} \right)}}
\renewcommand{\b}[1]{[\!\hspace{.1mm}[{#1}]\!\hspace{.1mm}]}
\title[]{Hypersurface support for noncommutative complete intersections}
\date{\today}
\author{Cris Negron}
\address{Department of Mathematics, University of North Carolina, Chapel Hill, NC 27599}
\email{cnegron@email.unc.edu}
\author{Julia Pevtsova}
\address{Department of Mathematics, University of Washington, Seattle, WA 98195}
\email{julia@math.washington.edu}
\begin{document}
\maketitle

\begin{abstract}
We introduce an infinite variant of hypersurface support for finite-dimensional, noncommutative complete intersections.  By a noncommutative complete intersection we mean an algebra $R$ which admits a smooth deformation $Q\to R$ by a Noetherian algebra $Q$ which is of finite global dimension.  We show that hypersurface support defines a support theory for the big singularity category $\Sing(R)$, and that the support of an object in $\Sing(R)$ vanishes if and only if the object itself vanishes.  Our work is inspired by Avramov and Buchweitz' support theory for (commutative) local complete intersections.  In the companion piece \cite{negronpevtsova3}, we employ hypersurface support, and the results of the present paper, to classify thick ideals in stable categories for a number of families of finite-dimensional Hopf algebras.
\end{abstract}

\section{Introduction}

In continuing the studies of \cite{negronpevtsova}, we introduce a notion of \emph{hypersurface support} for infinite-dimensional modules over a ``noncommutative complete intersection".  By a noncommutative complete intersection we mean an algebra $R$ which admits a smooth deformation $Q\to R$ by a Noetherian algebra $Q$ which is of finite global dimension. In the sibling project \cite{negronpevtsova3}, we use hypersurface support for infinite-dimensional modules to classify thick ideals in certain tensor triangulated categories associated to finite-dimensional Hopf algebras.
\par

This work began as a generalization of Avramov and Buchweitz' theory of support for local (commutative) complete intersections, and the arguments employed in the text are often influenced by their commutative counterparts.  One can compare with \cite{avramovbuchweitz00, avramoviyengar18}, in particular.
\par

Let us now consider $k$ an arbitrary field, and $R$ a finite-dimensional algebra with prescribed smooth deformation $Q\to R$, by a Noetherian algebra $Q$ of finite global dimension.  To be clear, by a deformation we mean a choice of a flat $Z$-algebra $Q$ equipped with a map $Q\to R$ which reduces to an isomorphism $k\ot_ZQ\cong R$ at a distinguished point for $Z$, and by \emph{smoothness} we mean that the parametrizing algebra $Z$ is smooth (commutative and augmented) over $k$. We denote by $m_Z \subset Z$ the augmentation ideal of $Z$.
\par

As a basic example, one can consider a skew polynomial ring $Q=k_q[x_1,\dots, x_n]$, with skew commutator relations $x_ix_j-q^{a_{ij}}x_jx_i$ at $q$ a root of unity of order $l$, $Z$ the central algebra $k[x_1^l,\dots, x_n^l]$, and the truncation 
\[
R=\mbb{C}_q[x_1,\dots, x_n]/(x_1^l , \ldots, x_n^l)
\]
(cf.\ \cite{bensonerdmannholloway07,pevtsovawitherspoon09,pevtsovawitherspoon15}).  However, as noted in \cite[\S 2]{negronpevtsova}, there are various additional examples coming from studies in Hopf algebras.  For example, we have the small quantum group $u_q(\mfk{g})$ along with its deformation provided by the De Concini-Kac algebra $U^{DK}_q(\mfk{g})\to u_q(\mfk{g})$.
\par

Given such $Q\to R$ as above, one can consider the projective space $\mbb{P}(m_Z/m_Z^2)$ and assign to any point
\[
c:\Spec(K)\to \mbb{P}(m_Z/m_Z^2)
\]
a (noncommutative) hypersurface algebra $Q_c=Q_K/(f)$ where $f \mod m^2_{Z_K}$ is a representative for the point $c$.  For an $R$-module $M$, either finite-dimensional or infinite-dimensional, we say that $M$ is {\it supported} at such a point $c$ if the base change $M_K$ is of \emph{infinite} projective dimension over $Q_c$, and we define the hypersurface support of $M$ as
\[
\supp^{hyp}_\mbb{P}(M):=\left\{
\begin{array}{c}
\text{the image of all points }c:\Spec(K)\to \mbb{P}(m_Z/m_Z^2)\\
\text{at which  $\operatorname{projdim}_{Q_c} M_K = \infty$} 
\end{array}\right\}.
\]
To be clear, we restrict along the induced map $Q_c\to R_K$ to consider $M_K$ as a module over $Q_c$.  Also, the subscript $\mbb{P}$ in the notation $\supp^{hyp}_\mbb{P}$ indicates the projective space $\mbb{P}(m_Z/m_Z^2)$ specifically.  Our first result (in conjunction with Lemma \ref{lem:451} below) deals with the apparent ambiguity of this definition.
 
\begin{theoremA}[\ref{thm:AI}]
Consider $M$ any $R$-module, and $f,g\in m_Z$ with equivalent, nonzero, classes $\bar{f}=\bar{g}$ in $m_Z/m_Z^2$.  Then $M$ has finite projective dimension over $Q/(f)$ if and only if $M$ has finite projective dimension over $Q/(g)$. 
\end{theoremA}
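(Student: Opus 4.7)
Write $h := g - f \in m_Z^2$. The plan is to reformulate finite projective dimension over $Q/(f)$ in terms of a degree-$2$ cohomology operator, and then observe that this operator depends only on the class $\bar f \in m_Z/m_Z^2$; combined with the symmetric statement for $g$, this immediately gives the theorem.

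Since $Z$ is smooth and $\bar f \neq 0$ in $m_Z/m_Z^2$, the element $f$ is part of a regular system of parameters at the augmentation point of $Z$, hence a non-zero-divisor there; by flatness of $Q$ over $Z$, $f$ is also a non-zero-divisor in $Q$ locally at the augmentation (which is all that matters since every $R$-module is annihilated by $m_Z$). So $Q/(f)$ is resolved over $Q$ by the length-one Koszul complex $0 \to Q \xrightarrow{f} Q \to Q/(f) \to 0$. Fix a $Q$-projective resolution $P_\bullet \to M$. Because $f M = 0$, multiplication by $f$ is null-homotopic on $P_\bullet$; pick a null-homotopy $\sigma_f$. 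The Eisenbud--Gulliksen construction associates to $\sigma_f$ a degree-$2$ cohomology operator $\chi_f$ on $\Ext^*_{Q/(f)}(M, -)$, identified with the $d_2$-differential of the change-of-rings spectral sequence $\Ext^p_{Q/(f)}(M, \Ext^q_Q(Q/(f), N)) \Rightarrow \Ext^{p+q}_Q(M, N)$. Since $Q$ has finite global dimension the spectral sequence abuts to zero in high degree, forcing $\chi_f$ to be eventually an isomorphism $\Ext^p \to \Ext^{p+2}$. Consequently $\operatorname{projdim}_{Q/(f)}(M) < \infty$ is equivalent to nilpotence of $\chi_f$ on $\Ext^*_{Q/(f)}(M, N)$ for a suitable test module $N$ (for instance $N = R$).

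The decisive step is then that the class $[\chi_f] \in \Ext^2$ depends only on $\bar f$. The construction is additive in $f$: the sum $\sigma_f + \sigma_h$ serves as a null-homotopy for multiplication by $f+h$, so $\chi_{f+h} = \chi_f + \chi_h$ at the level of classes. For $h \in m_Z^2$, write $h = \sum_i u_i v_i$ with $u_i, v_i \in m_Z$; a null-homotopy for multiplication by $h$ can be assembled from those of the $u_i$ and $v_i$, and the resulting class is (up to chain homotopy) a sum of cup products $\chi_{u_i} \smile \chi_{v_i}$. Such cup products live naturally in $\Ext^4$ and contribute zero to $\Ext^2$, where $\chi_h$ lives. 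Hence $\chi_h = 0$, giving $\chi_f = \chi_g$; the two finite-projective-dimension conditions then coincide.

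\textbf{Main obstacle.} The substantive technical input is the rigorous construction of the Eisenbud--Gulliksen cohomology operator in this noncommutative setting together with the characterization of finite projective dimension in terms of its nilpotence --- the noncommutative analogue of the classical CI theory of \cite{avramovbuchweitz00}. Granted this machinery, the invariance under modification of $f$ by $m_Z^2$ is an essentially formal consequence of additivity of $\chi$ and the degree mismatch that kills cup products as classes in $\Ext^2$. A potential pitfall is that the null-homotopy $\sigma_h$ for a product $h = uv$ involves ordering of factors in the noncommutative setting (typically $\sigma_u \circ \mu(v) + \mu(u) \circ \sigma_v$ for multiplication operators $\mu(-)$), so the verification of the cup-product formula must be carried out at the chain level without appealing to commutativity.
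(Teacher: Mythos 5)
Your plan is a genuinely different route from the paper's, but it has a gap at its core. The operator $\chi_f$ you construct acts on $\Ext^*_{Q/(f)}(M,-)$, while $\chi_g$ acts on $\Ext^*_{Q/(g)}(M,-)$, and $\chi_{f+h}$ would live over yet a third ring $Q/(f+h)$. These are $\Ext$ over different rings, so the additivity $\chi_{f+h}=\chi_f+\chi_h$ and the desired conclusion ``$\chi_f=\chi_g$'' do not type-check: you would need an identification between $\Ext^*_{Q/(f)}$ and $\Ext^*_{Q/(g)}$ before such equations make sense, and producing that identification is precisely what the theorem asks for. The one place where $\sigma_f^2$ and $\sigma_g^2$ both live naturally is $\Ext^2_Q(M,M)$ over the regular ring $Q$, but there $\Ext^{\gg 0}_Q=0$ by finite global dimension, so nothing asymptotic can be read off. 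The ``$\chi_h$ is a sum of degree-$4$ cup products, hence zero in $\Ext^2$'' step is also not a vanishing argument: a degree mismatch between the two sides indicates something is miswritten, not that the left side is zero. Finally, for infinite-dimensional $M$ you cannot simply test against ``a suitable $N$, say $N=R$'': the input the paper actually uses is Corollary~\ref{cor:g_dims}, which detects finite projective dimension of a $\operatorname{Jac}(Q)$-torsion module over the Gorenstein ring $Q/(f)$ via vanishing of $\Ext^{\gg 0}_{Q/(f)}(\Lambda,M)$ --- with $\Lambda$, not $M$, in the first slot --- and this rests on the nontrivial Proposition~\ref{prop:dim_funs}.

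Your instinct that ``the operator depends only on $\bar f$'' is right, but the paper realizes it by different means. In the appendix proof of Theorem~\ref{thm:AI}, one builds a dg algebra resolution $T_Q\overset{\sim}\to\Lambda$ over $Q$ containing the Koszul complex $\mcl{K}_Q$, forms Tate extensions $\msc{T}_f=\overline{T}_Q\langle y;d(y)=z_f\rangle$ and $\msc{T}_g$ resolving $\Lambda$ over $Q/(f)$ and $Q/(g)$, and then shows via \cite[Lemma 2.2]{avramoviyengar18} that the reductions $k\ot_Z\msc{T}_f$ and $k\ot_Z\msc{T}_g$ are \emph{literally the same} complex of $R$-bimodules, so $\Tor^{Q/(f)}_*(\Lambda,M)$ and $\Tor^{Q/(g)}_*(\Lambda,M)$ coincide on the nose. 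A legitimate operator-theoretic route does appear later in Proposition~\ref{prop:KotExt}: there the operators live in the single graded algebra $A_Z=\Sym\big(\Sigma^{-2}(m_Z/m_Z^2)^*\big)$ acting on $\RHom_R(\Lambda,M)$ over the one ring $R$, and $\RHom_{Q_c}(\Lambda_K,M_K)$ is recovered as the Koszul-dual fiber $K[t]\ot^{\rm L}_{A_Z}\RHom_R(\Lambda,M)$; the left-hand side visibly depends only on the point $c\in\mbb{P}(m_Z/m_Z^2)$, which is where invariance under perturbation of $f$ by $m_Z^2$ is built in. But making that precise requires the Koszul duality machinery of \cite{negronpevtsova}, whereas the Tate-resolution proof of Theorem~\ref{thm:AI} is more self-contained and is what the paper uses.
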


To look at things from a different perspective, we can consider the singularity category $\Sing(R)$, which is the quotient $D^b(R)/\langle \Proj(R)\rangle$ of the bounded derived category by complexes of finite projective dimension.  An $R$-module $M$ is then supported at $c$ if its image along the exact map of triangulated categories $\Sing(R)\to \Sing(Q_c)$ (see Section \ref{sect:allmypeoplesisgorenstein}) is nonzero.
\par

From the singularity category perspective, it is clear that hypersurface support has the expected properties of a support theory for $\Sing(R)$ (see e.g.\ \cite{balmerfavi11}).  Namely, it is stable under shifting, splits over arbitrary sums $\supp^{hyp}_\mbb{P}(\oplus_\lambda M_\lambda)=\cup_\lambda M_\lambda$, and whenever there is an exact triangle $N\to M\to N'$, the support of $M$ lies in the union of the supports of $N$ and $N'$.  We show furthermore that the supports of finite-dimensional $R$-modules are in fact \emph{closed} subsets in $\mbb{P}(m_Z/m_Z^2)$, as a byproduct of the following result.

\begin{theoremA}[\ref{prop:493}]
For any finite-dimensional $R$-module $V$, there is an associated coherent sheaf $\mcl{E}_V$ on $\mbb{P}(m_Z/m_Z^2)$ such that
\[
\supp^{hyp}_\mbb{P}(V)=\Supp_\mbb{P}\mcl{E}_V.
\]
\end{theoremA}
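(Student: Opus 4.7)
My plan is to construct $\mcl{E}_V$ as the Serre coherent sheaf on $\mbb{P}(m_Z/m_Z^2) = \Proj\,S$ associated to the Ext algebra $\Ext^*_R(V,V)$, viewed as a graded module over $S := \Sym(m_Z/m_Z^2)$ with generators placed in cohomological degree~$2$. The module structure comes from a canonical action of $S$ on $\Ext^*_R(V,V)$, which I would produce from the smooth deformation $Q\to R$ via Kodaira--Spencer-type cohomology operators: for $\bar f\in m_Z/m_Z^2$ and any lift $f\in m_Z$, the quotient $Q/(m_Z^2 + fQ)$ is a square-zero deformation of $R$, and its obstruction class $\chi_f\in \mrm{HH}^2(R)$ depends only on $\bar f$. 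Composition with the Yoneda action then gives a central, $S$-linear structure on $\Ext^*_R(V,V)$. This is the natural noncommutative analogue of the Avramov--Buchweitz approach, and one expects much of the needed machinery to already be implicit in \cite{negronpevtsova}.

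To guarantee coherence of $\mcl{E}_V := \widetilde{\Ext^*_R(V,V)}$, I would next establish finite generation of $\Ext^*_R(V,V)$ over $S$. Following Gulliksen's template, the proof uses smoothness of $Z$ together with the finite global dimension of $Q$ to construct an explicit resolution of $V$ by modules of the form $R\otimes_k P$ with $P$ finitely generated projective over $S$, obtained by iteratively killing the obstructions $\chi_f$ as $f$ ranges over a basis of $m_Z/m_Z^2$. Applying $\Hom_R(-,V)$ yields a complex of finitely generated $S$-modules whose cohomology is $\Ext^*_R(V,V)$, which is therefore finitely generated.

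Finally, I would check $\Supp_\mbb{P}\mcl{E}_V = \supp^{hyp}_\mbb{P}(V)$ point by point. For a point $c=[\bar f]$ with residue field $K$, the stalk $(\mcl{E}_V)_c$ is obtained from the homogeneous localization of $\Ext^*_R(V,V)$ inverting $\chi_f$, and vanishes precisely when multiplication by $\chi_f$ is eventually nilpotent. A change-of-rings argument along $Q_c\to R_K$ identifies $\chi_f$ with the Eisenbud periodicity operator on $\Ext^*_{Q_c}(V_K,V_K)$, whose eventual bijectivity is equivalent to $\mrm{projdim}_{Q_c}V_K<\infty$; independence of the lift $f$ is provided by Theorem~\ref{thm:AI}. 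The main obstacle is the nonvanishing direction: showing that if $\mrm{projdim}_{Q_c}V_K=\infty$ then some class in $\Ext^*_R(V,V)$ survives localization at $c$. This requires a careful analysis of the change-of-rings spectral sequence from $Q_c$ to $R_K$ to verify that its edge map realizes $\chi_f$ as the hypersurface periodicity operator, so that the eventual nonzero behavior of $\Ext^*_{Q_c}(V_K,V_K)$ persists at the level of $\mcl{E}_V$.
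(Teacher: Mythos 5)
Your route is in the same spirit as the paper's but differs in several concrete respects, and one of your intermediate claims is misstated. The paper builds $\mcl{E}_V$ from $\Ext^*_R(V,\Lambda)$, where $\Lambda$ is the semisimple quotient, not from $\Ext^*_R(V,V)$; moreover it does not re-derive the cohomological operators or the finite generation from scratch. Instead it uses the Bezrukavnikov--Ginzburg action of $A_Z=\Sym(\Sigma^{-2}(m_Z/m_Z^2)^\ast)$ (Section \ref{sect:sm00th}) and imports coherence of $\Ext^*_R(V,\Lambda)^\sim$ wholesale from \cite[Corollary 4.7]{negronpevtsova}, and the entire local computation at a point $c$ from \cite[Lemma 6.8]{negronpevtsova}, which already identifies the localized fiber of $\Ext^*_{R_K}(V_K,\Lambda_K)$ along $\phi_{c,K}$ with the vanishing of $\Ext^{\gg 0}_{Q_c}(V_K,\Lambda_K)$. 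The remaining work in the paper's proof is only the (short) reduction to geometric points via Lemma \ref{lem:451} and the elementary observation $\Supp(\pi^\ast\mcl{F})=\pi^{-1}\Supp\mcl{F}$. Your proposal is thus reconstructing machinery that the authors have already packaged, which is legitimate but considerably heavier, and the hard steps you flag as ``the main obstacle'' are precisely the content of \cite[Lemma 6.8]{negronpevtsova}.

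Two specific issues. First, your sentence ``whose eventual bijectivity is equivalent to $\operatorname{projdim}_{Q_c}V_K<\infty$'' has the logic inverted: over the hypersurface $Q_c$ the Eisenbud operator is \emph{eventually bijective on the nonzero part of} $\Ext$ precisely when $\operatorname{projdim}_{Q_c}V_K=\infty$ (it is the periodicity isomorphism on the eventually $2$-periodic tail), while when $\operatorname{projdim}_{Q_c}V_K<\infty$ the high-degree $\Ext$ groups themselves vanish so the operator is nilpotent. Your earlier sentence about eventual nilpotence is the correct criterion; you should drop the bijectivity remark or state it as ``eventually bijective on nonvanishing high-degree $\Ext$.'' Second, the finite generation sketch via ``modules of the form $R\otimes_k P$ with $P$ finitely generated projective over $S$'' is garbled as written -- $S$ is the graded symmetric algebra on $(m_Z/m_Z^2)^\ast$ in cohomological degree $2$, not a ring of coefficients one tensors with $R$ over $k$ to get an $R$-module -- and it would need to be replaced by the actual Gulliksen/Shamash-type construction (or, better, by the citation the paper uses). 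Finally, while the change from $\Ext^*_R(V,\Lambda)$ to $\Ext^*_R(V,V)$ is plausible -- both should have the same support, since over the hypersurface $Q_c$ the Tate self-extensions of $V_K$ are nonzero exactly when $V_K$ is nonzero in $\Sing(Q_c)$ -- you would still owe a proof that the two graded $S$-modules have the same support, or redo the \cite[Lemma 6.8]{negronpevtsova} computation with $V$ in place of $\Lambda$ as the second argument.
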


The sheaf $\mcl{E}_V$ is constructed out of the extensions $\Ext_R^*(V, \Lambda)$ where $\Lambda$ is the maximal semisimple quotient of $R$. Hence, the right hand side of the equality in the above theorem is a cohomological support. In proving Theorem~\ref{prop:493} we also establish an identification between the hypersurface support of the present paper and the hypersurface support of \cite{negronpevtsova}, which is certainly expected.  

As a final result we prove a detection theorem for hypersurface support, which is an analog of Dade's Lemma \cite[Lemma 11.8]{dade78} in this context. 

\begin{theoremA}[\ref{thm:detection}, \ref{cor:g_dims}] 
For $M$ any $R$-module, $\supp^{hyp}_\mbb{P}(M)=\emptyset$ if and only if $M$ has finite projective dimension over $R$.
\end{theoremA}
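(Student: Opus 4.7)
The plan is to split into the two implications. The forward direction is essentially formal: assuming $\operatorname{projdim}_R M < \infty$, the base change satisfies $\operatorname{projdim}_{R_K} M_K < \infty$. Since $\bar{f}$ is a nonzero class in $m_{Z_K}/m_{Z_K}^2$ and $Z$ is smooth, one extends $f$ to a regular system of parameters generating $m_{Z_K}$; flatness of $Q$ over $Z$ then exhibits $R_K$ as a quotient of $Q_c$ by a regular sequence of length $\dim Z - 1$, so $\operatorname{projdim}_{Q_c} R_K < \infty$. Composing the two finitenesses yields $\operatorname{projdim}_{Q_c} M_K < \infty$, and hence no such $c$ lies in $\supp^{hyp}_\mbb{P}(M)$.

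For the converse I would first dispatch the finite-dimensional case. For $V$ finite-dimensional, Theorem~\ref{prop:493} identifies $\supp^{hyp}_\mbb{P}(V)$ with $\Supp_\mbb{P} \mcl{E}_V$, so the hypothesis forces $\mcl{E}_V = 0$. Because $\mcl{E}_V$ is the sheafification of a finitely generated graded module built from $\Ext^*_R(V, \Lambda)$ (via the polynomial action coming from $\Ext^*_R(\Lambda, \Lambda)$), its vanishing gives $\Ext^n_R(V, \Lambda) = 0$ for all $n \gg 0$. Since every simple $R$-module is a summand of the maximal semisimple quotient $\Lambda$, we obtain $\Ext^n_R(V, S) = 0$ for every simple $S$ and every sufficiently large $n$, and the standard vanishing criterion for finite-dimensional algebras then gives $\operatorname{projdim}_R V < \infty$.

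The extension from finite- to infinite-dimensional modules is the principal obstacle. My plan is to work inside the big singularity category $\Sing(R)$ and verify that the full subcategory $\mathscr{T}$ of objects with empty hypersurface support is a localizing subcategory which meets the compact objects trivially. Closure of $\mathscr{T}$ under shifts, triangles, and arbitrary coproducts is immediate from the additivity properties of hypersurface support recorded in the introduction, and the finite-dimensional case handled above, together with the expected identification of the compact objects of $\Sing(R)$ with the classical (small) singularity category, shows that $\mathscr{T}$ contains no nonzero compact object.

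What then remains is the general fact that a localizing subcategory of a compactly generated triangulated category which meets the compacts trivially must itself be trivial. The compact generation of $\Sing(R)$ should follow from $R$ being Iwanaga-Gorenstein — a consequence of the existence of the smooth deformation $Q \to R$ by a ring of finite global dimension, which is presumably the content of Corollary~\ref{cor:g_dims} referenced in the statement. This Gorenstein-ness, together with the requisite compatibility of hypersurface support with the standard constructions on totally acyclic or Gorenstein projective complexes, is where I expect the main technical work to reside; everything else in the proof is either a direct base-change computation or an appeal to results already proved earlier in the paper.
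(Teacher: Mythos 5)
Your forward direction and your treatment of the finite-dimensional case (via Theorem~\ref{prop:493}) are both sound. But the proposed reduction of the infinite-dimensional case to the finite-dimensional one rests on a ``general fact'' that is false: it is \emph{not} true that a localizing subcategory of a compactly generated triangulated category which meets the compacts trivially is itself trivial. Already in $\D(\mbb{Z})$, the full subcategory of complexes on which every prime $p$ acts invertibly (equivalently, complexes with $\mbb{Q}$-linear cohomology) is a nonzero localizing subcategory containing no nonzero perfect complex; the stable homotopy category and big singularity categories exhibit the same phenomenon. This failure is precisely why detection statements for infinite-dimensional modules --- Dade's lemma for big modules over elementary abelian $p$-groups, the Benson--Carlson--Rickard style arguments, the Kronecker-quiver argument of Benson--Iyengar--Krause--Pevtsova --- require genuine work and are usually \emph{inputs} to stratification theorems rather than consequences of compact generation. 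So the localizing-subcategory step has no chance of closing the argument.

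What the paper actually does in place of that step is supply a concrete cohomological mechanism. Proposition~\ref{prop:KotExt} (a consequence of the Koszul duality between $Q_c$ and the quotient of $A_Z$ cut out by $c$) shows that for each geometric point $c\colon\Spec(K)\to\mbb{P}(m_Z/m_Z^2)$, the derived fiber of the dg $A_Z$-module $N=\RHom_R(\Lambda,M)$ along $\phi_c\colon A_Z\to K[t]$ is exactly $\RHom_{Q_c}(\Lambda_K,M_K)$. Thus $\supp^{hyp}_\mbb{P}(M)=\emptyset$ translates to a uniform (by Corollary~\ref{cor:g_dims}, with bound $d=\operatorname{gldim} Q$) vanishing of the cohomology of all derived fibers of $N$. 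Theorem~\ref{thm:vanish} --- a dg-module version, proved by induction on the number of variables and ultimately resting on Neeman's fiberwise-vanishing criterion for quasi-coherent complexes on $\mbb{P}^n$ --- then upgrades this pointwise vanishing to boundedness of $H^\ast(N)=\Ext^\ast_R(\Lambda,M)$, and Corollary~\ref{cor:g_dims} again converts that to $\operatorname{projdim}_R M<\infty$. This fiberwise-to-global bootstrap is the actual technical heart of the theorem, and it replaces, rather than instantiates, the categorical shortcut you proposed.
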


The proof of the independence of representatives, Theorem \ref{thm:AI}, is modeled upon the proof of Avramov-Iyengar of the same result for (commutative) complete intersections \cite{avramoviyengar18}. The proof of the detection theorem, on the other hand, deviates from the proof of an analogous theorem in the commutative case: We apply our results on Koszul duality from \cite{negronpevtsova} whereas in \cite{bensoniyengarkrausepevtsova} the argument is inspired by \cite{bensoncarlsonrickard97} and uses the Kronecker quiver lemma. The Koszul duality approach yields a new proof of the ``infinite Dade's lemma" even in such a classical case as  a group algebra of an elementary abelian $p$-group over a field of characteristic $p$. 
\par

In the sibling text \cite{negronpevtsova3}, hypersurface support is used explicitly to classify thick ideals, and compute the spectrum of prime ideals, in the stable categories $\stab(\msf{u})$ for $\msf{u}$ among various classes of Hopf algebras.  We consider, for example, rings of functions $\O(\mcl{G})$ on a finite group scheme $\mcl{G}$, (bosonized) quantum complete intersections, Drinfeld doubles $\mcl{D}(B_{(1)})$ for Borel subgroups $B\subset \mbb{G}$ in almost-simple algebraic groups, and quantum Borels $u_q(B)$ in type $A$.

\subsection{Acknowledgements}

Thanks to Lucho Avramov, Eric Friedlander, Henning Krause, and Mark Walker for helpful commentary.  Special thanks to Srikanth Iyengar for offering myriad insights throughout the production of this work.  The first named author is supported by NSF grant DMS-2001608.  The second named author is supported by NSF grants DMS-1901854 and the Brian and Tiffinie Pang faculty fellowship.  This material is based upon work supported by the National Science Foundation under Grant No. DMS-1440140, while the first author was in residence at the Mathematical Sciences Research Institute in Berkeley, California, during the Spring 2020 semester, and the second author was in digital residence.

\tableofcontents

\section{Background and generic commentary}
\label{sect:global}

We work over a field $k$ of arbitrary characteristic.  All ``modules" are left modules.

\subsection{Smooth deformations}

Recall that a deformation of an algebra $R$ is a pair of a commutative augmented algebra $Z$ and an algebra map $Q\to R$ from a \emph{flat} $Z$-algebra $Q$ which reduces to an isomorphism $k\ot_ZQ\cong R$.  Note that, as part of this definition, we require that the structure map $Z\to Q$ has central image.
\par

By a \emph{(formally) smooth} deformation of an algebra $R$ we mean a deformation $Q\to R$ parameterized by a (formally) smooth algebra $Z$ over our base field $k$.  In the formal setting, we require that $Z$ is complete with finite-dimensional tangent space at its distinguished point.  This implies an isomorphism of local algebras $Z\cong k\b{y_1,\dots,y_n}$ (see \cite[\S3]{negronpevtsova} for a more detailed account).  Throughout we let $m_Z\subset Z$ denote the maximal ideal corresponding to the augmentation $1:Z\to k$.
\par

Note that a deformation parametrized by a smooth, i.e.\ finite type, algebra $Z$ can be completed to produce a formally smooth analog.  This is technically convenient, because working with local rings is convenient.

\subsection{The specific setup}
\label{sect:setup}

Throughout this work we consider specifically a deformation $Q\to R$ of a finite-dimensional algebra $R$ parametrized by a formally smooth algebra $Z$, which is isomorphic to a power series in finitely many variables.  We assume additionally that $Q$ is Noetherian and that the quotient $\Lambda=R/\operatorname{Jac}(R)$ is separable over $k$, so that all base changes $\Lambda_K$ along arbitrary field extensions $k\to K$ are also semisimple.  One should always consider the deformation $Q$ to be \emph{Gorenstein}--although this assumption is not strictly necessary for many results.
\par

We make a further assumption that the deformation $Q\to R$ admits an \emph{integral form}, by which we mean that $Q$ is obtained from a finite-type deformaton of $R$ via completion.  The relevance of this assumption does not appear until Section \ref{sect:hypersurfer}, at which point we explain the notion in more detail.

\subsection{New deformations from old}

Given a deformation $Q\to R$ as in \ref{sect:setup}, and $f\in m_Z$ with nonzero reduction $\bar{f}\in m_Z/m_Z^2$, we can produce another deformation $Q/(f)\to R$ parametrized by the algebra $Z/(f)$.  The algebra $Z/(f)$ will still be a power series ring, so that $Q/(f)\to R$ is still of the type demanded in Section \ref{sect:setup}.

\begin{remark}
Existence of an integral form for $Q/(f)$ is subtle, but can be dealt with.  (See Remark \ref{rem:944} below.) 
\end{remark}
\par

In the early sections of this text, when we consider ``a deformation $Q\to R$" we are considering either a deformation $Q\to R$ with $Q$ Noetherian and of finite global dimension, or some associated hypersurface $Q/(f)\to R$ thereof.  So we have these two flavors of deformation to consider: one of finite global dimension and one which is (generally) singular.
\par

At the point at which we begin to discuss hypersurface support specifically, we fix $Q$ to be of finite global dimension, and speak explicitly of the associated hypersurface deformations $Q/(f)\to R$.

\subsection{Smooth deformations and actions on categories}
\label{sect:sm00th}

Given a (formally) smooth deformation $Q\to R$, parametrized by a given (formally) smooth commutative algebra $Z$, we consider $A_Z$ the dg algebra
\[
A_Z:=\Sym\big(\Sigma^{-2}(m_Z/m_Z^2)^\ast\big),
\]
with vanishing differential.  Bezrukavnikov and Ginzburg \cite{bezrukavnikovginzburg07} show that the deformation $Q$ specifies an action of the algebra $A_Z$ on the derived category of $R$-modules
\begin{equation}\label{eq:204}
\iota_Q:A_Z\to Z\left(D(R)\right).
\end{equation}
This action lifts to an action on a certain dg category $D_{coh}(R)$ \cite[\S 3.4]{negronpevtsova}, and manifests concretely as a collection of compatible central algebra maps
\[
\iota_Q^M:A_Z\to \REnd_R(M),
\]
at arbitrary $M$.
\par

The algebra $A_Z$, along with its action on $D(R)$, can be seen as a generalization of the algebra of cohomological operators associated to a local complete intersection \cite{gulliksen74,eisenbud80,avramovbuchweitz00}.

\subsection{Hypersurface support: a preliminary report}

Consider $Q\to R$ a deformation as in Section \ref{sect:setup}, with $Q$ additionally of finite global dimension.  For a closed point $c\in \mbb{P}(m_Z/m_Z^2)$, we consider an associated hypersurface algebra $Q/(f)$, with $f\in m_Z$ such that $\bar{f}\in m_Z/m_Z^2$ is a (nonzero) representative for $c$.  We speak of $f\in m_Z$, specifically, as our representative for $c$.  When the choice of representative is irrelevant, we write $Q_c$ for an arbitrary expression $Q_c=Q/(f)$.  We similarly define hypersurface algebras $Q_c$ at non-closed points in $\mbb{P}(m_Z/m_Z^2)$ by employing base change (see Section \ref{sect:hypersurfer}).
\par

We would like to define the hypersurface support of $R$, relative to this deformation $Q$, as follows.

\begin{predefinition}
For $M$ an arbitrary $R$-module, we define the hypersurface support of $M$ as
\[
\supp^{hyp}_\mbb{P}(M)=\{c\in \mbb{P}(m_Z/m_Z^2): M_{k(c)}\ \text{\rm is of infinite projective dimension over }Q_c\}.
\]
\end{predefinition}

Here $M_{k(c)}$ is simply the base change $M\ot k(c)$ at the residue field for $c$.  We would then like to prove that this hypersurface support satisfies many desirable properties.  In pursuing this line of thinking, there are two primary issues which one has to deal with:
\begin{enumerate}
\item[(a)] One would like to show that this definition carries no ambiguity, in the sense that the projective dimension of $M_{k(c)}$ over $Q_c$ is independent of the choice of representative $f$ for $c$, and so independent of the specific choice of representing hypersurface algebra $Q_c=Q/(f)$.
\item[(b)] One would like to understand that the projective dimension for $M$ is encoded ``globally" on $\mbb{P}(m_Z/m_Z^2)$.
\end{enumerate}

For (b) we might mean, for example, that there is a sheaf $\mcl{E}_M$ on $\mbb{P}(m_Z/m_Z^2)$ whose fibers contain information about the projective dimension of $M$ at various $Q_c$.  In Proposition \ref{prop:dim_funs} and Theorem \ref{thm:AI} below, we deal with these fundamental issues (see also Proposition \ref{prop:KotExt}).  Having addressed these points, we formally define the hypersurface support in Section \ref{sect:hypersurfer}, and address some of its basic properties in Sections \ref{sect:detection}.

\section{Homological dimensions and derived functors}
\label{sect:dimensions}

For this section $Q\to R$ is a Noetherian deformation of a finite-dimensional algebra $R$ which is parametrized by a formally smooth algebra $Z\cong k\b{y_1,\dots,y_n}$, as in Section \ref{sect:setup}.  As usual $\Lambda=Q/\operatorname{Jac}(Q)=R/\operatorname{Jac}(R)$.  In our analysis of hypersurface support we will need the following basic result.

\begin{proposition}\label{prop:dim_funs}
Suppose that $M$ is any $\operatorname{Jac}(Q)$-torsion $Q$-module.  Then, for any given integer $d$, $M$ is of injective dimension $\leq d$ if and only if $\Ext^{>d}_Q(\Lambda,M)=0$.  Similarly, $M$ is of flat dimension $\leq d$ if and only if $\Tor^Q_{>d}(\Lambda,M)=0$.
\end{proposition}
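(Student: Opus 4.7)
The forward directions of both equivalences are immediate from the definitions of injective and flat dimension, so the content lies entirely in the reverse implications. My plan is to prove these by a minimal resolution argument in which the $\operatorname{Jac}(Q)$-torsion hypothesis on $M$, combined with the semisimplicity of $\Lambda$, reduces detection of the homological dimension to the single test module $\Lambda$.

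For the Ext/injective dimension statement, I would take a minimal injective resolution $M\to I^\bullet$ of $M$ in $\operatorname{Mod}(Q)$. The first step is a Matlis-style observation: since $Q$ is Noetherian and $\operatorname{Jac}(Q)$-adically complete (the latter follows from the formally smooth setup, as $m_Z Q\subseteq\operatorname{Jac}(Q)$ and the $m_Z$- and $\operatorname{Jac}(Q)$-adic topologies on $Q$ coincide), and $M$ is $\operatorname{Jac}(Q)$-torsion, each $I^i$ is again $\operatorname{Jac}(Q)$-torsion; in particular $\operatorname{soc}(I^i)$ is nonzero whenever $I^i\ne 0$, and it is a semisimple $\Lambda$-module. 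The second step exploits the semisimplicity of $\Lambda$: any map $\Lambda\to I^i$ factors through $\operatorname{soc}(I^i)$, which by essentiality of the embeddings in a minimal injective resolution is contained in $\operatorname{im}(d^{i-1})=\ker(d^i)$. Hence all differentials of $\Hom_Q(\Lambda,I^\bullet)$ vanish, yielding
\[
\Ext^i_Q(\Lambda,M)=\Hom_Q(\Lambda,I^i).
\]
Finally, $\Hom_Q(\Lambda,\operatorname{soc}(I^i))\ne 0$ whenever $I^i\ne 0$ (since $\Lambda$ surjects onto every simple $\Lambda$-module), so $\Hom_Q(\Lambda,I^i)=0$ exactly when $I^i=0$. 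Combining these three steps, $\Ext^{>d}_Q(\Lambda,M)=0$ forces $I^{>d}=0$, i.e., the injective dimension of $M$ is at most $d$.

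For the Tor/flat dimension statement, I would begin with Pontryagin duality: the exact functor $(-)^\vee=\Hom_k(-,k)$ swaps left and right $Q$-modules and yields both $\Tor^Q_i(\Lambda,M)^\vee\cong\Ext^i_{Q^{op}}(\Lambda,M^\vee)$ and $\operatorname{flatdim}_Q M=\operatorname{injdim}_{Q^{op}}M^\vee$. However, $M^\vee$ is $\operatorname{Jac}(Q)$-adically complete rather than torsion, so the Ext statement does not apply directly. Instead I would mirror the argument using a minimal projective resolution $P_\bullet\to\Lambda$ of $\Lambda$ as a right $Q$-module---whose differentials on $P_\bullet\otimes_Q M$ are given by matrices with entries in $\operatorname{Jac}(Q)$---combined with the torsion filtration $M=\operatorname{colim}_n M[\operatorname{Jac}(Q)^n]$ (whose successive quotients are $\Lambda$-modules), together with a dimension-shifting syzygy argument, to upgrade the given vanishing of $\Tor^Q_{>d}(\Lambda,M)$ to flatness of the $d$-th syzygy of $M$.

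The hardest part is the structural input underlying the first step of the Ext case: that the minimal injective resolution of a torsion module consists of torsion injectives. In the commutative local case this is immediate from Matlis decomposition; in our noncommutative Noetherian complete setting it should follow from essentiality of $M\hookrightarrow I^0$ combined with the defining property of torsion, but the argument requires some care to spell out.
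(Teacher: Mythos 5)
Your approach is genuinely different from the paper's, which follows Avramov--Foxby: the paper uses the characterization of flat (resp.\ injective) dimension via vanishing of $\Tor$ (resp.\ $\Ext$) against all finitely generated test modules, filters each finitely generated module by cyclic prime modules (Corollary~\ref{cor:259}), and runs a Noetherian induction on the maximal ``bad'' annihilator prime $\mathfrak{p}$: if $m_Z\not\subset\mathfrak{p}$, a central nonzerodivisor $x\in m_Z\setminus\mathfrak{p}$ on $N_\mathfrak{p}$ yields, via maximality, an invertible action of $x$ on $\Tor^Q_{>d}(N_\mathfrak{p},M)$, contradicting the $m_Z$-torsion of that group (which comes from the torsion hypothesis on $M$); hence $m_Z\subset\mathfrak{p}$, $N_\mathfrak{p}$ is finite-dimensional, and lies in the thick subcategory generated by $\Lambda$. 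This handles $\Tor$ and $\Ext$ completely in parallel.

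For your $\Ext$ argument: the socle computation from a minimal injective resolution is clean \emph{granted} the structural input you flag, namely that the terms of a minimal injective resolution of a $\operatorname{Jac}(Q)$-torsion module are again torsion. You should be aware that this is not merely a matter of ``spelling out'' essentiality plus the definition of torsion: in a noncommutative setting this is the statement that the $m_Z$-torsion theory on $Q$-modules is \emph{stable} (closed under injective hulls), which is a real lemma. It is true here---$Q$ is Noetherian and module-finite over its central complete local Noetherian subring $Z$, hence FBN, and the relevant set of primes is specialization-closed---but it requires an appeal to (or a proof of) Matlis-type structure theory for such rings, not just the essentiality of $M\hookrightarrow I^0$. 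The paper's route avoids this entirely.

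For your $\Tor$ argument there is a genuine gap. To conclude $\operatorname{flatdim}_Q M\le d$ one must show $\Tor^Q_{>d}(N,M)=0$ for \emph{every} finitely generated right $Q$-module $N$; the whole difficulty is passing from the single test module $N=\Lambda$ to arbitrary $N$. Your proposed ingredients do not address this jump: the torsion filtration $M=\operatorname{colim}_n M[\operatorname{Jac}(Q)^n]$ changes the \emph{right-hand} argument, and while it does show $\Tor^Q_i(\Lambda,M)$ is a colimit of $\Tor^Q_i(\Lambda,M_n)$, it says nothing about $\Tor^Q_i(N,M)$ for other $N$. The minimal resolution of $\Lambda$ is likewise tied to the test module $\Lambda$. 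And the ``dimension-shifting syzygy'' step runs into the problem that the $d$-th syzygy $\Omega^d M$ in a flat resolution is a submodule of a flat module, hence is not $\operatorname{Jac}(Q)$-torsion, so none of the torsion/semi-simplicity machinery applies to it; showing $\Omega^d M$ is flat again requires $\Tor^Q_1(N,\Omega^d M)=\Tor^Q_{d+1}(N,M)=0$ for all finitely generated $N$, which is precisely the original problem. Some version of the paper's prime-filtration and Noetherian-induction mechanism (or an equivalent device that enlarges the class of test modules from $\Lambda$ to all finitely generated modules) seems unavoidable.
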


By a torsion module we mean that each element in $M$ is annihilated by some power of the Jacobson radical.  In this case each Hom group $\Hom_Q(L,M)$ from a finitely generated module $L$ is $m_Z$-torsion, for $m_Z$ the maximal ideal in $Z$.  By Noetherianity of $Q$ the extensions $\Ext^\ast_Q(L,M)$ are $m_Z$-torsion as well.  Similarly, $\Tor_\ast^Q(L,M)$ is torsion for any finitely generated $L$. 
The proof of the Proposition is postponed till Section~\ref{sub:3.1}. 
\par

Of course, we are most interested in applying the above result in the case in which $M$ is an (infinite-dimensional) $R$-module.

\subsection{Application to Gorenstein deformations}

As remarked previously, we generally consider $Q$ to be a \emph{Gorenstein} deformation of $R$.  Recall that $Q$ is called Gorenstein (resp.\ $d$-Gorenstein) if $\operatorname{injdim}_Q(Q)<\infty$ (resp.\ $\operatorname{injdim}_Q(Q)\leq d$).  Specifically, $Q$ should be of finite injective dimension over itself both on the left and the right.  In this case, Proposition \ref{prop:dim_funs} proliferates as

\begin{corollary}\label{cor:g_dims}
Suppose that $Q$ is $d$-Gorenstein, and that $M$ is a $\operatorname{Jac}(Q)$-torsion module.  Then the following are equivalent:
\begin{multicols}{2}
\begin{enumerate}
\item[(a)] $\Ext^{\gg 0}_Q(\Lambda,M)=0$.\vspace{1mm}
\item[(c)] $\Tor^Q_{\gg 0}(\Lambda,M)=0$.\vspace{1mm}
\item[(e)] $\operatorname{injdim}(M)<\infty$.\vspace{1mm}
\item[(g)] $\operatorname{projdim}(M)<\infty$.\vspace{1mm}
\item[(i)] $\operatorname{flatdim}(M)<\infty$.\vspace{1mm}
\end{enumerate}
\columnbreak
\begin{enumerate}
\item[(b)] $\Ext^{>d}_Q(\Lambda,M)=0$.\vspace{1mm}
\item[(d)] $\Tor^Q_{>d}(\Lambda,M)=0$.\vspace{1mm}
\item[(f)] $\operatorname{injdim}(M)\leq d$.\vspace{1mm}
\item[(h)] $\operatorname{projdim}(M)\leq d$.\vspace{1mm}
\item[(j)] $\operatorname{flatdim}(M)\leq d$.\vspace{1mm}
\end{enumerate}
\end{multicols}
\end{corollary}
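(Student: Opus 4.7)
The plan is to pivot everything off Proposition \ref{prop:dim_funs}, letting the $d$-Gorenstein hypothesis on $Q$ serve to equate the three dimension notions and pin any resulting finite value at $\leq d$. Direct consequences of Proposition \ref{prop:dim_funs} are immediate. Specializing at the integer $d$ one reads off (b)$\Leftrightarrow$(f) and (d)$\Leftrightarrow$(j). Applying the same proposition with $d$ replaced by an arbitrary integer $N$, and quantifying over $N$, yields (a)$\Leftrightarrow$(e) and (c)$\Leftrightarrow$(i): the assertion $\Ext^{\gg 0}_Q(\Lambda,M)=0$ is by definition the existence of some $N$ with $\Ext^{>N}_Q(\Lambda,M)=0$, which Proposition \ref{prop:dim_funs} translates to $\operatorname{injdim}(M)\leq N$ for some $N$, i.e.\ $\operatorname{injdim}(M)<\infty$, and symmetrically for the flat dimension.

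Next I would record the routine implications. Each right-column hypothesis implies its left-column counterpart: (b)$\Rightarrow$(a), (d)$\Rightarrow$(c), (f)$\Rightarrow$(e), (h)$\Rightarrow$(g), (j)$\Rightarrow$(i). The universal inequality $\operatorname{flatdim}\leq\operatorname{projdim}$ additionally supplies (h)$\Rightarrow$(j) and (g)$\Rightarrow$(i). What remains is to link $\operatorname{projdim}$ with $\operatorname{injdim}$ and $\operatorname{flatdim}$, and to tighten any finite dimension to a bound of $d$; this is where the Gorenstein hypothesis really enters.

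For this I would invoke the Iwanaga--Gorenstein theorem for two-sided Noetherian rings of self-injective dimension $d$: for any $Q$-module $M$,
\[
\operatorname{projdim}(M)<\infty\iff\operatorname{injdim}(M)<\infty\iff\operatorname{flatdim}(M)<\infty,
\]
and when any of these is finite, all three are bounded by $d$. This single input closes (e)$\Leftrightarrow$(g)$\Leftrightarrow$(i) and (f)$\Leftrightarrow$(h)$\Leftrightarrow$(j) simultaneously, completing the cycle of equivalences.

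The main anticipated obstacle is the sharpness ``$\leq d$'' for arbitrary, possibly non-finitely-generated, modules, since textbook treatments of Iwanaga--Gorenstein rings often restrict to finitely generated modules. If one prefers a self-contained derivation in the spirit of Proposition \ref{prop:dim_funs}, the route is to refine a known finite bound $\operatorname{flatdim}(M)\leq N$ down to $\operatorname{flatdim}(M)\leq d$ using that the $d$-th syzygy of $\Lambda$ in any projective resolution over $Q$ is Gorenstein-projective (a consequence of the $d$-Gorenstein hypothesis on $Q$): complete projective resolutions of such syzygies have vanishing $\Tor$ against any module of finite flat dimension, forcing $\Tor^Q_{>d}(\Lambda,M)=0$ and symmetrically $\Ext^{>d}_Q(\Lambda,M)=0$.
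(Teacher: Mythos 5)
Your proposal matches the paper's proof essentially verbatim: the equivalences among (e)--(j) are exactly Iwanaga's theorem on Gorenstein rings (the paper cites Iwanaga and Enochs--Jenda \cite[Proposition 9.1.7]{enochsjenda00}, which do treat arbitrary modules and give the sharp bound $\leq d$, so the concern you flag at the end is resolved by the reference), and the links (a)$\Leftrightarrow$(e), (c)$\Leftrightarrow$(i), (b)$\Leftrightarrow$(f), (d)$\Leftrightarrow$(j) come directly from Proposition \ref{prop:dim_funs}, with the remaining equivalences closed by chasing around the cycle. Your sketch of an alternative self-contained argument via Gorenstein-projectivity of the $d$-th syzygy of $\Lambda$ is a reasonable addition but not needed given the cited results.
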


\begin{proof}
The equivalences between (e)--(j) are implied by general results of Iwanaga \cite[Theorem 2]{iwanaga80} \cite[Proposition 9.1.7]{enochsjenda00}.  Proposition \ref{prop:dim_funs} provides the equivalence between (a) and (e), and (c) and (i).  We therefor find (a)$\Leftrightarrow$(b) and (c)$\Leftrightarrow$(d) as well.
\end{proof}

\subsection{Primes and associated primes}

We recall that a prime ideal in $Q$ is a (two-sided) proper ideal $P$ in $Q$ such that the product of two ideals lies in $\mathfrak p$, $IJ\subset \mathfrak p$, if and only if $I$ is contained in $\mathfrak p$ or $J$ is contained in $\mathfrak p$.  By considering principal ideals, we see that primeness is equivalent to the implication
\[
aQb\subset \mathfrak p \ \Rightarrow \ a\in \mathfrak p\ \text{or}\ b\in \mathfrak p,\ \ \text{for any }a,b\in Q.
\]
From this latter description it is clear that prime ideals in commutative rings, in the above sense, are prime in the usual sense.  This expression of primeness also implies

\begin{lemma}
Consider $f:Z\to Q$ any map from a commutative algebra which has central image.  The preimage of a prime $\mathfrak p\subset Q$ along $f$ is a prime ideal in $Z$.
\end{lemma}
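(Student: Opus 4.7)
The plan is straightforward: reduce the primality criterion for the two-sided prime $\mathfrak p\subset Q$, namely $aQb\subset \mathfrak p\Rightarrow a\in\mathfrak p$ or $b\in\mathfrak p$, to the usual commutative criterion on $Z$ by exploiting the centrality of the image of $f$.

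First, I would note that $f^{-1}(\mathfrak p)$ is automatically an ideal in $Z$, being the preimage of a two-sided ideal under a ring homomorphism, and that it is proper because $f(1)=1\notin\mathfrak p$.

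For primality, suppose $a,b\in Z$ with $ab\in f^{-1}(\mathfrak p)$. Then $f(a)f(b)=f(ab)\in\mathfrak p$. Since $f(a)$ and $f(b)$ are central in $Q$, for any $q\in Q$ we have
\[
f(a)\,q\,f(b)=q\cdot f(a)f(b)\in q\mathfrak p\subset \mathfrak p,
\]
so $f(a)\,Q\,f(b)\subset \mathfrak p$. Applying the equivalent formulation of primality recalled just before the statement, we conclude that $f(a)\in\mathfrak p$ or $f(b)\in\mathfrak p$, i.e.\ $a\in f^{-1}(\mathfrak p)$ or $b\in f^{-1}(\mathfrak p)$, as required.

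There is essentially no obstacle here; the only point to flag is that one must genuinely use centrality of the image of $f$ to move $q$ past $f(a)$ in the displayed computation. Without that, one would only get $f(a)\cdot Q\cdot f(b)\subset Q\mathfrak p Q\subset \mathfrak p$ more crudely, which of course still works but obscures why commutativity of $Z$ alone is not enough — the hypothesis that $f(Z)\subset Z(Q)$ is what makes $f(a)f(b)=f(ab)$ an element that already ``sees'' the noncommutative prime via the $aQb$-criterion.
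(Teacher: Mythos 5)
Your proof is correct and follows essentially the same route as the paper's: both use centrality of $f(a),f(b)$ to identify the condition $f(a)\,Q\,f(b)\subset\mathfrak p$ with $f(a)f(b)\in\mathfrak p$, then invoke the $aQb$-criterion for primality. The extra remarks (properness of $f^{-1}(\mathfrak p)$, and the comment on why centrality is genuinely needed) are accurate but not different in substance.
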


\begin{proof}
Consider $\mathfrak p$ a prime ideal in $Q$.  For central $a,b\in Q$, we have $aQb=abQ$, so that $aQb\subset \mathfrak p$ if and only if $ab\in \mathfrak p$.  Hence, $ab\in \mathfrak p$ implies $a\in \mathfrak p$ or $b\in \mathfrak p$.  Considering the case $a=f(x)$ and $b=f(y)$ for $x,y\in Z$, we see that $xy\in f^{-1}\mathfrak p$ implies $x\in f^{-1}\mathfrak p$ or $y\in f^{-1}\mathfrak p$.  Rather, $f^{-1}\mathfrak p$ is prime ideal in $Z$.
\end{proof}

One also has the notion of a prime module over $Q$.  This is a nonzero module $N$ over $Q$ such that any nonzero submodule $N'\subset N$ has $\operatorname{ann}(N')=\operatorname{ann}(N)$ (see e.g.\ \cite{goodearlwarfields04}).  One can see that the annihilator of a prime module is prime, and by considering the quotient $N=Q/\mathfrak p$ of $Q$ by a prime ideal, we see also that all prime ideals appear as annihilators of prime modules.

\begin{lemma}\label{lem:245}
Suppose that $N$ is a prime $Q$-module, and $x$ is a central element in $Q-\operatorname{ann}(N)$.  Then the multiplication operation $x:N\to N$ is injective.
\end{lemma}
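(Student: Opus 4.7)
The plan is to argue by contradiction using the submodule structure of the kernel of multiplication by $x$. Set
\[
N' := \ker\bigl(x:N\to N\bigr) = \{n\in N : xn = 0\}.
\]
The first step is to observe that $N'$ is a $Q$-submodule of $N$. This uses centrality of $x$ in an essential way: for any $q\in Q$ and $n\in N'$ we have $x(qn) = q(xn) = 0$, so $qn\in N'$. Without centrality, there is no reason for $N'$ to be $Q$-stable, so this is where the central hypothesis enters.

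Next I would apply the primeness hypothesis. If $N' = 0$ there is nothing to prove, since then $x:N\to N$ is injective. So suppose $N'\neq 0$. Then by the definition of a prime module, $\operatorname{ann}(N') = \operatorname{ann}(N)$. By construction $x\cdot N' = 0$, i.e.\ $x\in \operatorname{ann}(N')$, whence $x\in \operatorname{ann}(N)$. This contradicts the assumption that $x\in Q-\operatorname{ann}(N)$, so $N'=0$, completing the argument.

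I do not anticipate any real obstacle here; the statement is essentially a direct unwinding of the definition of a prime module, and the only subtle point is recognizing that centrality of $x$ is what allows one to promote the additive kernel to a genuine $Q$-submodule on which primeness can be invoked. One could also phrase the argument in terms of annihilators directly: the left ideal $(0:_N x) = N'$ is in fact a two-sided submodule by centrality, and primeness forces $(0:_N x)$ to be either $0$ or all of the appropriate annihilator class, but the kernel formulation is the cleanest.
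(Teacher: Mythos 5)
Your proof is correct and follows essentially the same route as the paper: identify the kernel of $x\cdot(-)$ as a $Q$-submodule (via centrality), then invoke primeness to force its annihilator to coincide with $\operatorname{ann}(N)$, yielding the contradiction. The only difference is that you spell out the role of centrality more explicitly than the paper does.
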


\begin{proof}
The kernel $N'$ of $x\cdot-$ is a submodule in $N$, and hence is either $0$ or has annihilator equal to that of $N$.  The latter case cannot happen as $x\notin\operatorname{ann}(N)$, while $x\in \operatorname{ann}(N')$.  So we must have $N'=0$.
\end{proof}

\begin{proposition}[{\cite[Proposition 3.12]{goodearlwarfields04}}]
Any finitely generated, nonzero, module $M$ over $Q$ has a prime submodule.
\end{proposition}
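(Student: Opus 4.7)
The plan is to apply Noetherian induction to the family of annihilators of nonzero submodules of $M$, which is the standard strategy for producing prime submodules over noncommutative Noetherian rings. I would start by forming the collection
\[
\mathcal{F} = \{\operatorname{ann}_Q(N) : 0 \neq N \subseteq M \text{ a submodule}\},
\]
observing first that $\mathcal{F}$ is nonempty (since $\operatorname{ann}_Q(M) \in \mathcal{F}$) and second that each element of $\mathcal{F}$ is a two-sided ideal of $Q$: the annihilator of a left module is closed under right multiplication, because if $q$ kills every $n \in N$ then so does $qr$, as $rn\in N$.

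Next, using the fact that $Q$ is left Noetherian (Section~\ref{sect:setup}), I would extract a maximal element $I_0 \in \mathcal{F}$, witnessed by some nonzero submodule $N_0 \subseteq M$ with $\operatorname{ann}_Q(N_0) = I_0$. Here the Noetherian hypothesis enters in the form: every nonempty family of left ideals has a maximal element, and two-sided ideals are in particular left ideals.

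Finally, I would verify the primeness of $N_0$ directly from the definition given in the excerpt. Any nonzero submodule $N' \subseteq N_0$ satisfies
\[
I_0 = \operatorname{ann}_Q(N_0) \subseteq \operatorname{ann}_Q(N'),
\]
simply because annihilating a larger module is a stronger condition. Since $\operatorname{ann}_Q(N') \in \mathcal{F}$ and $I_0$ is maximal there, this containment must be an equality, so $\operatorname{ann}_Q(N') = \operatorname{ann}_Q(N_0)$ for every nonzero $N' \subseteq N_0$. This is exactly the primeness of $N_0$.

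There is no real obstacle: the argument is a textbook application of the ascending chain condition. The only mild subtlety is confirming that the annihilators are two-sided so that the Noetherian property on $Q$ genuinely applies to $\mathcal{F}$, which is immediate from the left-module structure.
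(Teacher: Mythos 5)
Your argument is correct and is precisely the standard Noetherian-maximality proof that Goodearl--Warfield give for this result; the paper itself cites it without reproof, so there is nothing to diverge from. One small observation: your argument never actually uses the finite generation of $M$ --- the existence of a prime submodule holds for any nonzero module over a Noetherian ring by exactly the same reasoning --- so you have in fact proved something slightly stronger.
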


By considering cyclic submodules of prime modules, and Noetherianity of $Q$, we have

\begin{corollary}\label{cor:259}
Any finitely generated $Q$-module $M$ admits a finite filtration $F_0M\subset F_1M\subset \dots\subset F_rM=M$ such that each subquotient $F_iM/F_{i-1}M$ is a cyclic prime $Q$-module
\end{corollary}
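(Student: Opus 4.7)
The plan is to build the filtration inductively from the bottom, harvesting a cyclic prime subquotient at each stage via the preceding proposition, and to invoke Noetherianity to force termination. Set $F_0 M := 0$. Suppose $F_i M$ has been constructed with $F_i M \subsetneq M$. Since $Q$ is Noetherian and $M$ is finitely generated, the quotient $M/F_i M$ is again a nonzero finitely generated $Q$-module, so by the cited Proposition 3.12 of Goodearl--Warfield it contains a prime submodule $N_{i+1}$.

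Next I would cut $N_{i+1}$ down to a cyclic piece. Pick any nonzero $\bar{x} \in N_{i+1}$ and let $F_{i+1} M$ be the preimage in $M$ of the cyclic submodule $Q\bar{x} \subseteq M/F_i M$. By construction $F_{i+1} M / F_i M = Q \bar{x}$ is cyclic, and I would then check that it inherits primeness from $N_{i+1}$: any nonzero submodule $L \subseteq Q\bar{x} \subseteq N_{i+1}$ is nonzero inside $N_{i+1}$, so $\operatorname{ann}_Q(L) = \operatorname{ann}_Q(N_{i+1}) = \operatorname{ann}_Q(Q\bar{x})$ by primeness of $N_{i+1}$. Hence $Q\bar{x}$ is a cyclic prime $Q$-module, as required.

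To finish, observe that the chain $F_0 M \subsetneq F_1 M \subsetneq \cdots$ is strictly ascending so long as $F_i M \neq M$, and it takes place inside the Noetherian $Q$-module $M$. Therefore it must stabilize after finitely many steps, yielding some $r$ with $F_r M = M$. There is no real obstacle here: the only substantive point is the observation that a nonzero submodule of a prime module is itself prime, which is immediate from the definition; once that is in hand, the construction and the Noetherian termination argument proceed mechanically.
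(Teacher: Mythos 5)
Your proof is correct and is precisely the argument the paper has in mind (the paper gives no explicit proof, only the one-line remark ``by considering cyclic submodules of prime modules, and Noetherianity of $Q$''). You correctly supply the small verification that a nonzero submodule of a prime module is again prime, and the Noetherian termination argument is exactly right.
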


\subsection{Proof of Proposition \ref{prop:dim_funs}}
\label{sub:3.1}

The following proof is adapted from \cite[Proposition 5.5]{avramovfoxby91}.

\begin{proof}[Proof of Proposition \ref{prop:dim_funs}]
We prove the statement for $\Tor$ first.  We have
\[
\operatorname{flatdim}(M)=\operatorname{min}\{d:\Tor_{>d}^Q(M',M)=0\ \forall\ \text{fin gen'd }M'\},
\]
or $\operatorname{flatdim}(M)=\infty$ if no such $d$ exists \cite[Proposition 3.2.4]{weibel95}.  By Corollary \ref{cor:259} we find further that
\[
\operatorname{flatdim}(M)=\operatorname{min}\{d:\Tor^{>d}_Q(N,M)=0\ \forall\ \text{cyclic prime modules }N\},
\]
or $\operatorname{flatdim}(M)=\infty$ if no such $d$ exists.
\par

Now let's assume that $\Tor^{>d}_Q(\Lambda,M)=0$ as in the statement of Proposition and that $d$ is the minimal nonnegative integer with this property. 
We want to show that $\operatorname{flatdim}(M) =d$.  Consider now the possibly empty collection of prime ideals in $Q$,
\[
\msc{P}=\{\operatorname{ann}(N):N\text{ cyclic prime with }\Tor_{>d}^Q(N,M)\neq 0\}.
\]
We want to show that this collection is empty.  Suppose this is not the case, and choose $\mathfrak p$ a maximal element in $\msc{P}$, which must exist by Noetherianity of $Q$.  Let $N_\mathfrak p$ be its associated cyclic prime module.  Note that we have a surjection $Q/\mathfrak p\to N_\mathfrak p$, by cyclicity of $N_\mathfrak p$.  We claim that the maximal ideal $m_Z$ of $Z$ must be contained in $\mathfrak p$, in which case $Q/\mathfrak p$ and $N_\mathfrak p$ must be finite-dimensional.
\par

If $m_Z\nsubseteq \mathfrak p$, then chose $x\in m_Z-\mathfrak p$.  Applying Lemma \ref{lem:245}, we observe an exact sequence of modules
\begin{equation}\label{eq:285}
0\to N_\mathfrak p\overset{x}\to N_\mathfrak p\to N_\mathfrak p/xN_\mathfrak p\to 0
\end{equation}
Note that $\mathfrak p\subsetneq xQ+\mathfrak p\subset \operatorname{ann}(N_\mathfrak p/xN_\mathfrak p)$.  So now, by considering a filtrations of $N_\mathfrak p/xN_\mathfrak p$ as in Corollary \ref{cor:259} and maximality of $\mathfrak p$ in $\msc{P}$, we conclude that $\Tor_{>d}^Q(N_\mathfrak p/xN_\mathfrak p,M)=0$.  The long exact sequence on cohomology obtained from \eqref{eq:285} therefore implies that the action map
\sloppy{

}

\[
x\cdot-:\Tor_{>d}^Q(N_\mathfrak p,M)\to \Tor_{>d}^Q(N_\mathfrak p,M)
\]
is an isomorphism.  But this cannot happen as our torsion hypothesis implies that $\Tor_\ast^Q(N_\mathfrak p,M)$ is $m_Z$-torsion, and hence any $x\in m_Z$ annihilates elements in each nonzero Tor group.  So we conclude that $m_Z\subset \mathfrak p$, and that $N_\mathfrak p$ is finite-dimensional.  But now, $N_\mathfrak p$ is in the thick subcategory generated by $\Lambda$, so that
\[
\Tor_{>d}^Q(\Lambda,M)=0\ \Rightarrow\ \Tor_{>d}^Q(N_\mathfrak p,M)=0,
\]
which contradicts our assumption that $\Tor_{>d}^Q\in \msc{P}$, and hence contradicts our assumption that $\msc{P}$ is nonempty.  It follows that $\operatorname{flatdim}(M)=d$.

As for the injective dimension, by Baer's criterion we have
\[
\operatorname{injdim}(M)=\operatorname{min}\{d:\Ext^{>d}_Q(M',M)=0\ \forall\ \text{fin gen'd }M'\},
\]
or $\operatorname{injdim}(M)=\infty$ if no such $d$ exists.  So we may proceed exactly as above, with $\Tor$ replaced by $\Ext$, to obtain the desired result.
\end{proof}

\section{Gorenstein rings and big singularity categories}
\label{sect:gorenstein}

A Gorenstein (resp.\ $d$-Gorenstein) ring $R$ is a ring such that $\operatorname{injdim}_R(R)<\infty$ (resp.\ $\operatorname{injdim}_R(R)\leq d$).  For such a ring, objects of finite projective dimension and finite injective dimension coincide \cite{iwanaga80}.
\par

All of the rings which are of interest in this work are Gorenstein (see Lemma \ref{lem:gorenstein} below).    Singularity categories are employed in the formal definition of hypersurface support in Section \ref{sect:hypersurfer}. We now recall the construction of the singularity category for a Gorenstein ring.

\subsection{Singularity categories}

For $R$ a Gorenstein ring, we define the ``big" singularity category as
\[
\Sing(R):=D^b(R\text{-Mod})/\langle \Proj(R)\rangle,
\]
where $R$-Mod is the category of arbitrary $R$-modules.  This is the Verdier quotient of the bounded derived category by the thick subcategory of objects of fintie projective = finite injective dimension.  We also have the ``small" singularity category, which is the quotient of the derived category of finitely generated modules by the thick subcategory of perfect complexes
\[
\sing(R):=D^b(R\text{-mod})/\langle \operatorname{proj}(R)\rangle.
\]
Although we do not explicitly use the following result, it clarifies the manner in which the theory of the present paper is an extension of that of \cite{negronpevtsova}.

\begin{lemma}\label{lem:okok}
The category $\Sing(R)$ is compactly generated, and the functor $\sing(R)\to \Sing(R)$ is an equivalence onto the subcategory of compacts $\sing(R)\cong \Sing(R)^c$.
\end{lemma}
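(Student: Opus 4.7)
The plan is to invoke Krause's framework for the stable derived category of a Noetherian ring in two steps: first realize $\Sing(R)$ as a homotopy category of complexes of injectives, then apply a general compact generation result to this model. For the first step, I would show that the natural functor
\[
\Sing(R) \longrightarrow \msf{K}_{\operatorname{ac}}(\operatorname{Inj} R),
\]
which sends a bounded complex to the totally acyclic tail of its complete injective resolution, is an equivalence of triangulated categories. Here $\msf{K}_{\operatorname{ac}}(\operatorname{Inj} R)$ is the homotopy category of acyclic complexes of arbitrary injective $R$-modules. The Gorenstein hypothesis plays a crucial role: it ensures that every $R$-module admits a complete injective resolution, and that objects of finite injective and finite projective dimension coincide, so that the kernel of the passage to the acyclic tail is exactly $\langle \Proj(R)\rangle$.

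For the second step, I would apply Krause's theorem that, for $R$ Noetherian, the homotopy category $\msf{K}(\operatorname{Inj} R)$ is compactly generated with compacts identified with $D^b(R\text{-mod})$, together with the recollement
\[
\msf{K}_{\operatorname{ac}}(\operatorname{Inj} R) \hookrightarrow \msf{K}(\operatorname{Inj} R) \twoheadrightarrow D(R).
\]
The right-hand localization is the canonical quotient, and on compacts it restricts to the quotient $D^b(R\text{-mod}) \to \sing(R)$ with kernel the perfect complexes. A standard argument from the theory of localizing subcategories (going back to Neeman and Krause) then transports compact generation to the left-hand term of the recollement and identifies its compacts with $\sing(R)$.

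The main obstacle is the first step: establishing the equivalence $\Sing(R) \simeq \msf{K}_{\operatorname{ac}}(\operatorname{Inj} R)$ for arbitrary, not necessarily finitely generated, modules. Buchweitz's original argument for the stable category of Gorenstein projective modules works cleanly for finitely generated modules over a Gorenstein Noetherian ring; extending it to the ``big'' setting requires the existence of complete injective resolutions in the unbounded module category, where the Noetherian hypothesis combined with Krause's construction of such resolutions becomes essential. Once this identification is in hand, the second step is essentially formal, and the Verdier quotient description of both $\Sing(R)$ and $\sing(R)$ lines up with the recollement picture to yield the claimed equivalence onto the compacts.
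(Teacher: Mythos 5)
Your proposal is correct but follows a genuinely different route from the paper's. The paper realizes $\Sing(R)$ as the stable category $\underline{\operatorname{GProj}}(R)$ of (big) Gorenstein projectives, cites Iyengar--Krause and Bergh--Oppermann--Jorgensen for the equivalence $\underline{\operatorname{GProj}}(R)\cong\Sing(R)$, Buchweitz for $\underline{\operatorname{Gproj}}(R)\cong\sing(R)$, and Benson--Iyengar--Krause--Pevtsova for the compact generation of $\underline{\operatorname{GProj}}(R)$ with compacts $\underline{\operatorname{Gproj}}(R)$; it then chases the obvious square. You instead model $\Sing(R)$ by $\msf{K}_{\operatorname{ac}}(\operatorname{Inj} R)$ and run Krause's recollement $\msf{K}_{\operatorname{ac}}(\operatorname{Inj} R)\hookrightarrow \msf{K}(\operatorname{Inj} R)\twoheadrightarrow D(R)$, transporting compact generation from $\msf{K}(\operatorname{Inj} R)$. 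The paper's Remark immediately after the lemma explicitly acknowledges that the injective-complex route exists (citing Krause '05), so you are not inventing something out of left field; the authors simply preferred the Gorenstein-projective model because it plugs directly into the stable-category language used elsewhere. A couple of points worth being more careful about in your sketch: (i) over a Noetherian ring that is not Gorenstein, acyclic complexes of injectives need not be totally acyclic, and $\msf{K}_{\operatorname{ac}}(\operatorname{Inj} R)$ is not the same as $D^b(R\text{-Mod})/\langle\Proj(R)\rangle$; the Gorenstein hypothesis, via the Iwanaga/Iyengar--Krause result that acyclic equals totally acyclic for complexes of injectives, is exactly what rescues this, and you should make that identification precise rather than leave it as a flagged ``obstacle''. (ii) Neeman's theorem on compacts in Verdier quotients a priori only gives $\sing(R)\hookrightarrow \msf{K}_{\operatorname{ac}}(\operatorname{Inj} R)^c$ as an equivalence up to direct summands; to get an honest equivalence one must know $\sing(R)$ is idempotent complete, which in the Gorenstein case follows from Buchweitz's equivalence $\sing(R)\cong\underline{\operatorname{Gproj}}(R)$ --- so your route still ends up invoking Buchweitz, much as the paper does. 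In sum, the two approaches are parallel: each leans on one nontrivial equivalence-of-models theorem plus a compact generation theorem in that model, yours via injective complexes and Krause's recollement, theirs via Gorenstein projectives and BIKP.
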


\begin{proof}
The map $R\text{-Mod}\to \Sing(R)$ restricted to the subcategory $\operatorname{GProj}(R)$ of Gorenstein projectives \cite{enochsjenda00} induces an equivalence from the stable category
\[
\underline{\operatorname{GProj}}(R)\overset{\cong}\longrightarrow \Sing(R).
\]
\cite[Theorem 5.6, Corollary 6.6]{iyengarkrause} \cite[Theorem 3.1]{berghoppermannjorgensen15}.  Similarly we have an equivalence between the stable category of finitely generated Gorenstein projectives and the small singularity category $\underline{\operatorname{Gproj}}(R)\cong \sing(R)$ \cite{buchweitz86}.  It is known now that $\underline{\operatorname{GProj}}(R)$ is compactly generated with compact objects identified with $\underline{\operatorname{Gproj}}(R)$, via the functor induced by the inclusion $\operatorname{Gproj}(R)\to \operatorname{GProj}(R)$ \cite[Proposition 2.10]{bensoniyengarkrausepevtsova20}.  So one considers the square connecting the two equivalences above to obtain the claimed result.
\end{proof}

As a more terrestrial comment, one can see by partially resolving bounded complexes that any object in $\Sing(R)$ is isomorphic to a shift $\Sigma^n M$ of some $R$-module $M$.  Furthermore, by considering syzygies and cosyzygies we see that the objects $R\text{-Mod}\subset \Sing(R)$ are stable under shifting.  So we see that the additive map
\[
R\text{-Mod}\to \Sing(R)
\]
is essentially surjective.  The following is well-known to experts and can be deduced, for example, by identifying the singularity category with the stable category of Gorenstein projectives and applying \cite[Theorem 10.2.14]{enochsjenda00}.

\begin{lemma}
The category $\Sing(R)$ admits arbitrary (set indexed) sums, and the functor $R\text{\rm-Mod}\to \Sing(R)$ commutes with sums.
\end{lemma}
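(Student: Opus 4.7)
The plan is to transport the construction of coproducts through the equivalence $\Sing(R)\cong \underline{\operatorname{GProj}}(R)$ provided by Lemma \ref{lem:okok}, where $\underline{\operatorname{GProj}}(R)$ is the stable category of Gorenstein projective $R$-modules. The decisive input is that $\operatorname{GProj}(R)$ is closed under arbitrary direct sums; this is \cite[Theorem 10.2.14]{enochsjenda00} and relies crucially on $R$ being Gorenstein.

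Granted this closure property, the first step is to show that for any set-indexed family $\{G_\lambda\}_\lambda$ of Gorenstein projectives, the module-theoretic direct sum $\bigoplus_\lambda G_\lambda$ is their coproduct in $\underline{\operatorname{GProj}}(R)$. The universal property is essentially formal: morphisms in the stable category are module maps modulo those that factor through a projective, and a map $\bigoplus_\lambda G_\lambda \to N$ factors through a projective if and only if each restriction $G_\lambda \to N$ does. One direction is immediate by composing with the inclusions $G_\lambda \hookrightarrow \bigoplus_\lambda G_\lambda$; the reverse follows by taking the direct sum of the factoring projectives, which is itself projective. Transporting along the equivalence of Lemma \ref{lem:okok} gives arbitrary sums in $\Sing(R)$.

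For the second claim, that $\pi : R\text{-Mod}\to \Sing(R)$ commutes with sums, I would pass from arbitrary modules to Gorenstein projectives via sufficiently high syzygies. Since $R$ is $d$-Gorenstein, the $d$-th syzygy $\Omega^d M$ of any module $M$ is Gorenstein projective, and there is a canonical isomorphism $\pi(M)\cong \Sigma^d\pi(\Omega^d M)$ in $\Sing(R)$ coming from the truncation of a projective resolution. A termwise direct sum of projective resolutions is a projective resolution of the direct sum, so there is a natural identification $\Omega^d\bigl(\bigoplus_\lambda M_\lambda\bigr)\cong \bigoplus_\lambda \Omega^d M_\lambda$. Combining this with the first part and the fact that shifts preserve coproducts yields $\pi\bigl(\bigoplus_\lambda M_\lambda\bigr)\cong \coprod_\lambda \pi(M_\lambda)$, as desired.

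The main technical obstacle is the closure of $\operatorname{GProj}(R)$ under arbitrary direct sums, which is subtle and is genuinely a Gorenstein phenomenon; once this is imported from \cite{enochsjenda00}, the rest of the argument is a straightforward manipulation using syzygies and the universal property of direct sums in $R$-Mod.
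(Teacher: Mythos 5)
Your proof is correct and follows the same route the paper indicates: transport the problem through the equivalence $\Sing(R)\cong\underline{\operatorname{GProj}}(R)$ of Lemma \ref{lem:okok} and invoke \cite[Theorem 10.2.14]{enochsjenda00}. You have simply filled in the details the paper leaves as ``well-known to experts''—namely the universal-property verification that termwise direct sums give coproducts in the stable category, and the syzygy reduction $\pi(M)\cong\Sigma^d\pi(\Omega^d M)$ together with $\Omega^d(\oplus_\lambda M_\lambda)\cong\oplus_\lambda\Omega^d M_\lambda$ for the commutation claim.
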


\begin{remark}
In the presentation above we have used an identification between the singularity category of $R$ and its stable category of Gorenstein projectives.  One can also express the singularity category as the homotopy category of totally acyclic complexes of projectives (or injectives), as done in \cite{krause05} for example.
\end{remark}

\begin{remark}
In the case in which $R$ is Frobenius, the functor $R\text{-mod}\to \Sing(R)$ induces an equivalence from the big stable category $\Stab(R)\cong \Sing(R)$.
\end{remark}

\subsection{All of our algebras are Gorenstein}
\label{sect:allmypeoplesisgorenstein}

\begin{lemma}\label{lem:gorenstein}
Consider $Q\to R$ a deformation as in Section \ref{sect:setup}, and suppose that $Q$ is of finite global dimension.  Let $f_1,\dots, f_m$ be a regular sequence in the parametrizing algebra $Z$.  Then the quotient $Q/(f_1,\dots, f_m)$ is Gorenstein.  In particular, all hypersurface algebras $Q/(f)$ are Gorenstein, and $R$ is Gorenstein.
\end{lemma}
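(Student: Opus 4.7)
The strategy is to realize $Q/(f_1,\dots,f_m)$ as an iterated quotient of $Q$ by central non-zero-divisors and then to propagate the Gorenstein property one step at a time via a standard change-of-rings argument.

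\textbf{Step 1: $Q$ is Iwanaga--Gorenstein.} Since the structure map $Z\to Q$ has central image and $Q$ has finite global dimension, one has $\operatorname{injdim}_Q(Q)\le \operatorname{gldim}(Q)<\infty$ on both sides. Hence $Q$ itself is Gorenstein of some injective dimension $d$.

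\textbf{Step 2: $f_1,\dots,f_m$ is a central regular sequence on $Q$.} Centrality is immediate from the definition of a deformation. Regularity on $Q$ follows from regularity on $Z$ together with flatness of $Q$ over $Z$: multiplication by $f_i$ on $Q/(f_1,\dots,f_{i-1})Q=(Z/(f_1,\dots,f_{i-1}))\otimes_Z Q$ is obtained from the injective multiplication by $f_i$ on $Z/(f_1,\dots,f_{i-1})$ by the flat base change $-\otimes_Z Q$, and so remains injective.

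\textbf{Step 3: Modding out a central regular element preserves the Gorenstein property.} This is the core ingredient. Let $S$ be a (noncommutative) Iwanaga--Gorenstein ring of injective dimension $d$ and let $x\in S$ be a central non-zero-divisor. Then $S/(x)$ is Iwanaga--Gorenstein with $\operatorname{injdim}_{S/(x)}(S/(x))\le d-1$. This is the classical noncommutative Rees-type change-of-rings statement (see, e.g., \cite[Theorem 2]{iwanaga80} and the surrounding discussion): one uses the short exact sequence $0\to S\xrightarrow{x} S\to S/(x)\to 0$ and the Rees isomorphism $\Ext^{i}_S(M,N)\cong \Ext^{i-1}_{S/(x)}(M,N)$ valid for $S/(x)$-modules $M,N$ and all $i\ge 1$, to conclude that the injective dimension drops by exactly one. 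Iterating Step~3 along the central regular sequence $f_1,\dots,f_m$ of Step~2 shows that $Q/(f_1,\dots,f_m)$ is Gorenstein.

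\textbf{Step 4: The two ``in particular'' claims.} For a single $f\in m_Z$ with nonzero class $\bar f\in m_Z/m_Z^2$, $f$ is part of a regular system of parameters in the regular local ring $Z\cong k\b{y_1,\dots,y_n}$, hence by itself constitutes a regular sequence; applying the previous steps with $m=1$ gives that $Q/(f)$ is Gorenstein. For $R$, recall that $R=k\otimes_Z Q=Q/m_Z Q$, and $m_Z$ is generated by the regular sequence $y_1,\dots,y_n$, so the main statement with $(f_1,\dots,f_m)=(y_1,\dots,y_n)$ yields that $R$ is Gorenstein.

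The only slightly delicate point is Step~3: in the noncommutative setting one must check that the Rees-style change-of-rings formulas remain valid under the hypothesis that $x$ is central (rather than merely normal) and a non-zero-divisor. Once this is granted, the rest of the proof is a straightforward assembly of the ingredients above.
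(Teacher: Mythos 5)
Your approach is correct in outline and does differ from the paper's. The paper handles all $m$ of the $f_i$ at once: it takes the Koszul resolution $Kos_Q=Q\ot_Z Kos(f_1,\dots,f_m)$ of $Q/I$ over $Q$, notes the Koszul self-duality $\RHom_Q(Q/I,Q)\cong \Sigma^{-m}Q/I$, and then invokes the derived adjunction
\[
\RHom_{Q/I}(-,Q/I)\cong \Sigma^m\RHom_{Q/I}\bigl(-,\RHom_Q(Q/I,Q)\bigr)\cong \Sigma^m\RHom_Q(-,Q),
\]
which is bounded because $\operatorname{gldim}(Q)<\infty$. You instead iterate a one-element change of rings. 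This is morally the same statement — the derived adjunction packaged one $f_i$ at a time — and it works, and it also yields the sharper bound $\operatorname{injdim}(Q/I)\leq \operatorname{gldim}(Q)-m$. Steps 1, 2 and 4 of your plan are fine.

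There is, however, a genuine error in the key Step 3. The ``Rees isomorphism'' you write,
\[
\Ext^{i}_S(M,N)\cong \Ext^{i-1}_{S/(x)}(M,N)\quad\text{for $S/(x)$-modules $M,N$},
\]
is false. For example, with $S=k[x,y]$, $S/(x)=k[y]$, and $M=N=k$, one has $\Ext^1_S(k,k)=k^2$ while $\Ext^0_{S/(x)}(k,k)=k$. The correct Rees change-of-rings statement takes $N$ to be an $S$-module on which $x$ acts as a \emph{non-zero-divisor} (so in particular $N$ is not an $S/(x)$-module), and then
\[
\Ext^{n+1}_S(M,N)\cong \Ext^n_{S/(x)}(M,N/xN),\qquad n\geq 0,
\]
for $S/(x)$-modules $M$. (One sees this from the change-of-rings spectral sequence $E_2^{p,q}=\Ext^p_{S/(x)}(M,\Ext^q_S(S/(x),N))\Rightarrow\Ext^{p+q}_S(M,N)$: when $x$ is regular on $N$, the resolution $0\to S\xrightarrow{x}S\to S/(x)\to 0$ shows $\Ext^q_S(S/(x),N)$ is $N/xN$ in degree $1$ and vanishes otherwise, so the sequence collapses to the displayed isomorphism.) Applying this with $N=S$ gives $\Ext^{n+1}_S(M,S)\cong\Ext^n_{S/(x)}(M,S/(x))$, which is exactly what you need: $\operatorname{injdim}_S(S)\leq d$ implies $\operatorname{injdim}_{S/(x)}(S/(x))\leq d-1$. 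Also, your citation to \cite[Theorem 2]{iwanaga80} is not the Rees lemma; that theorem is the equivalence of finite projective, injective and flat dimension over Iwanaga--Gorenstein rings (which is used elsewhere in the paper, in Corollary \ref{cor:g_dims}). With the corrected statement of Rees's lemma, your iterated argument goes through.
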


Our proof is adapted from \cite[Lemma 5.3]{cassidyconnerkirkmanmoore16}.

\begin{proof}
Take $I$ the ideal generated by the $f_i$ in $Q$, and take $d=\operatorname{gldim}(Q)$.  Via the Koszul resolution $Kos=Kos(f_1,\dots,f_m)$ over $Z$, and correposnding Koszul resolution $Kos_Q=Q\ot_Z Kos$ of $Q/I$ over $Q$, we see that $\RHom_Q(Q/I,Q)\cong \Sigma^{-m}Q/I$.  Hence
\[
\begin{array}{rl}
\RHom_{Q/I}(-,Q/I)& \cong \Sigma^m\RHom_{Q/I}(-,\RHom^\ast_Q(Q/I,Q))\\
& \cong \Sigma^m\RHom_Q(-,Q).
\end{array}
\]
This functor has cohomology vanishing in degrees $>d$, so that $\operatorname{injdim}(Q/I)\leq d$.
\end{proof}

Let us close the subsection with a remark about singularity categories.  By considering the Koszul resolution for $R$ over $Q$, we see that the restriction map $D^b(R)\to D^b(Q)$ sends the thick ideal $\langle \Proj(R)\rangle$ into $\langle \Proj(Q)\rangle$, and hence descends to a triangulated functor
\begin{equation}\label{eq:402}
\operatorname{res}:\Sing(R)\to \Sing(Q).
\end{equation}

\section{Hypersurface support}
\label{sect:hypersurfer}

We provide the promised definition of hypersurface support for arbitrary $R$-modules, when $R$ comes equipped with the appropriate deformation.  We first elaborate on the ``integral form" hypothesis of Section \ref{sect:setup}, then deal with some technical issues regarding base change in the formal setting.

\subsection{Integral forms}
\label{sect:setup2}

As mentioned in Section \ref{sect:setup}, we assume that our deformation $Q\to R$ admits an integral form $\mcl{Q}\to R$.  By this we mean that $R$ admits a deformation $\mcl{Q}\to R$ for which the parametrizing algebra $\mcl{Z}$ is smooth, and in particular of finite type over $k$, and such that the completion at the distinguished point $1:\mcl{Z}\to k$ recovers our original deformation.  That is to say, $Z=\hat{\mcl{Z}}_1$, the completion of $\mcl Z$ at the kernel of the augmentation map $1: \mcl Z \to k$,  and $Q\cong Z\ot_{\mcl{Z}}\mcl{Q}$.
\par

The point of this integral form is to pacify certain conflicts which arise when attempting to deal simultaneously with base change and completion.

\begin{remark}
One may ask at this point why we even work with complete local rings, rather than local rings which are essentially of finite type.  The reason is that usual localization does not behave well with Hopf structures.  For example, for $\mbb{G}$ an algebraic group, the local ring at the identity $\O_{\mbb{G},1}$ does not have a Hopf structure, while the complete local ring $\hat{\O}_{\mbb{G},1}$ is a Hopf algebra in the symmetric monoidal category of pro-finite vector spaces.
\end{remark}

\subsection{Topological base change}

Consider a deformation $Q\to R$ as in Section \ref{sect:setup}, with some integral form $\mcl{Q}\to R$.  For a field extension $k\to K$, we let $\mcl{Z}_K$, $\mcl{Q}_K$, and $R_K$ denote the usual base change $-_K=K\ot-$, so that we have the base changed deformation $\mcl{Q}_K\to R_K$ parametrized by $\mcl{Z}_K$.  We have the standard base change operation
\[
(-)_K:R\text{-Mod}\to R_K\text{-Mod},\ \ M\mapsto M_K=K\ot M.
\]
\par

We complete at the distinguished point $1:\mcl{Z}_K\to K$ to obtain the corresponding formal deformation $Q_K\to R_K$ parametrized by $Z_K=\hat{(\mcl{Z}_K)}_1$.  So, from another perspective, we obtain the formal deformation $Q_K\to R_K$ via ``topological base change" $Z_K=\varprojlim_n (Z/m_Z^n)\ot K$ and $Q_K=\varprojlim_n (Q/m_Z^nQ)\ot K$.  For $Z=k\b{y_1,\dots,y_n}$ we have $Z_K=K\b{y_1,\dots,y_n}$.

\begin{lemma}\label{lem:505}
The base change $Q_K$ is Noetherian and of finite global dimension.
\end{lemma}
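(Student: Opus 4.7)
The plan is to use the integral form to reduce the claim about $Q_K$ to two standard operations: base change along the field extension $k\to K$ and completion at the augmentation. Since $\mcl{Z}$ is finite-type smooth over $k$, the base change $\mcl{Z}_K$ is finite-type smooth over $K$, and its completion at the augmentation is a formal power series ring $Z_K\cong K\b{y_1,\dots,y_n}$, which is regular Noetherian local of Krull dimension $n$. Flatness of $Q$ over $Z$ combined with smoothness of $\mcl{Z}$ ensures that the truncations $Q/m_Z^n Q$ are finite-dimensional over $k$ (their associated graded is $\operatorname{Sym}(m_Z/m_Z^2)\otimes R$), so the base-changed layers $Q_K/m_{Z_K}^n Q_K$ are finite-dimensional over $K$, and in particular $Q_K/m_{Z_K}Q_K\cong R_K$.

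To establish Noetherianity I will show that $Q_K$ is finitely generated as a module over the Noetherian ring $Z_K$. The ring $Q_K$ is $m_{Z_K}$-adically complete by construction, with finite-dimensional layers as noted above. Lifting a $K$-basis of $R_K$ to elements $x_1,\dots,x_d\in Q_K$ yields a $Z_K$-linear map $Z_K^{\oplus d}\to Q_K$ that is surjective modulo every power of $m_{Z_K}$ by Nakayama applied at each layer; completeness of both source and target then upgrades this to an honest surjection. This gives finite generation of $Q_K$ over $Z_K$, and hence Noetherianity of $Q_K$.

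For finite global dimension I will transport a projective resolution across base change. The map $Z\to Z_K$ is flat --- it arises from extending coefficients $k\to K$ followed by $m_Z$-adic completion, both of which preserve flatness --- whence $Q\to Q_K = Z_K\otimes_Z Q$ is flat as well. Choose a finite resolution $P_\bullet\to \Lambda$ by finitely generated projective $Q$-modules, which exists because $\operatorname{gldim}(Q)<\infty$ and $\Lambda$ is finitely generated. Applying $-\otimes_Q Q_K$ produces a finite projective resolution of $\Lambda\otimes_Q Q_K\cong \Lambda_K$ over $Q_K$. Separability of $\Lambda/k$ makes $\Lambda_K$ semisimple, and module-finiteness of $Q_K$ over the complete local ring $Z_K$ identifies $\Lambda_K$ with the semisimple quotient $Q_K/\operatorname{Jac}(Q_K)$. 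Every simple $Q_K$-module therefore appears as a summand of $\Lambda_K$ and has finite projective dimension, and together with Noetherianity of $Q_K$ this forces $\operatorname{gldim}(Q_K)<\infty$.

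The main technical obstacle is the interplay between base change and completion: $K\otimes_k Z$ need not be Noetherian when $K/k$ is infinite, so the flatness of $Z\to Z_K$ and the identification $Q_K\cong Z_K\otimes_Z Q$ must be confirmed carefully in the mixed base-change/completion setting prepared by the integral form hypothesis. Once those compatibilities are in hand, the remainder of the argument is a routine combination of flat base change, topological Nakayama, and separability of $\Lambda$.
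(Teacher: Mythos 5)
Your proof of Noetherianity is essentially the same as the paper's (finite over the complete Noetherian local ring $Z_K$), just with the topological Nakayama argument spelled out.  For finite global dimension, however, you take a genuinely different route from the paper, and the route as stated has a gap that you yourself flag but do not close.

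You want to transport a finite projective resolution of $\Lambda$ from $Q$ to $Q_K$ via the base change $-\otimes_Q Q_K$, for which you need $Q\to Q_K$ flat, and for that you need $Z\to Z_K$ flat.  Your justification --- that $Z\to Z_K$ ``arises from extending coefficients $k\to K$ followed by $m_Z$-adic completion, both of which preserve flatness'' --- does not work as written: $m_Z$-adic completion is an exact functor (hence produces a flat ring map) only on finitely generated modules over a \emph{Noetherian} ring, and $K\otimes_k Z$ need not be Noetherian when $K/k$ is not finitely generated.  You note exactly this obstacle in your closing paragraph but leave it unresolved, so the proof is incomplete.  The flatness of $Z\to Z_K$ is in fact true --- it follows, for instance, from the local criterion of flatness for the local homomorphism of regular local rings $k\b{y_1,\dots,y_n}\to K\b{y_1,\dots,y_n}$, using that $\dim Z_K=\dim Z + \dim(Z_K/m_ZZ_K)$ --- but that argument is not the one you gave.

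The paper's proof is arranged precisely to avoid having to confront the flatness of $Z\to Z_K$ at all.  It takes a finite projective resolution of $\Lambda$ over the \emph{finite-type} integral form $\mcl{Q}$, base-changes along $k\to K$ to get a resolution over $\mcl{Q}_K$ (clean, since $\mcl{Z}_K$ is still finite type and Noetherian), and only then applies $Z_K\otimes_{\mcl{Z}_K}-$, which is exact because $\mcl{Z}_K\to Z_K$ is a completion of a \emph{Noetherian} ring.  This two-step base change is what the integral form hypothesis is for.  Your later steps --- identifying $\Lambda\otimes_Q Q_K\cong\Lambda_K$, observing that $\Lambda_K$ is the full semisimple quotient of $Q_K$, and deducing $\operatorname{gldim}(Q_K)<\infty$ from finite projective dimension of all simples plus Noetherianity --- are correct and are a reasonable alternative to the paper's argument via $\Tor$-vanishing and minimal resolutions, but they only help once the flatness is actually secured.
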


\begin{proof}
$Q_K$ is Noetherian as it is finite over the Noetherian algebra $Z_K$.  We have a finite projective resolution of $P\to \Lambda$ of $\Lambda$ over the integral form $\mcl{Q}$, at least after localizing around $1\in\Spec(\mcl{Z})$.  Apply base change to obtain a finite resolution $P_K\to \Lambda_K$ over $\mcl{Q}_K$, then apply the exact functor $Z_K\ot_{\mcl{Z}_K}-$ to obtain a finite resolution $\hat{P}_K\to \Lambda_K$ over $Q_K$.  So, for $d$ the length of the resolution $P$, we have $\Tor_{>d}^{Q_K}(\Lambda_K,-)=0$.  By considering minimal resolutions for finitely generated $Q_K$-modules, this vanishing of $\Tor$ implies that $Q_K$ is of global dimension $\leq d$.
\end{proof}

\begin{lemma}\label{lem:base_change}
Consider a field extension $k\to K$.  For any finite-dimensional $R$-module $V$, and arbitrary $R$-module $M$, the natural maps
\[
K\ot\Tor^Q_\ast(V,M)\to \Tor^{Q_K}_\ast(V_K,M_K)\ \ \text{and}\ \ K\ot\Ext^\ast_Q(V,M)\to \Ext^\ast_{Q_K}(V_K,M_K)
\]
are isomorphisms.
\end{lemma}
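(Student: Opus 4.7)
The plan is to compute both sides of each isomorphism from a single finitely generated projective resolution of $V$, base-changed to $Q_K$ via the integral form. Concretely, since $V$ is finite-dimensional it is a finitely generated module over the integral form $\mcl Q$ annihilated by $m_{\mcl Z,1}$, so I can choose a resolution $\mcl P_\bullet \to V$ over $(\mcl Q)_{(1)}$ by finitely generated projectives. Setting $P_\bullet := Z\otimes_{\mcl Z}\mcl P_\bullet$ gives a finitely generated projective resolution of $V$ over $Q$ by flatness of $\mcl Z \to Z$, and setting $P^K_\bullet := Z_K\otimes_{\mcl Z}\mcl P_\bullet \cong Q_K\otimes_Q P_\bullet$ gives one of $V_K$ over $Q_K$ by flatness of $\mcl Z \to Z_K$.

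Granted these resolutions, the Tor statement will follow from the identification
\[
P^K_\bullet \otimes_{Q_K} M_K \;\cong\; P_\bullet \otimes_Q M_K \;\cong\; K\otimes_k\bigl(P_\bullet\otimes_Q M\bigr),
\]
where the first step is standard tensor associativity for base change and the second uses that the $Q$-action on $M_K = K\otimes_k M$ engages only the second factor. Taking homology and applying flatness of $K/k$ produces the desired natural isomorphism. For Ext, I would argue in parallel: by adjunction $\Hom_{Q_K}(Q_K\otimes_Q P_i, M_K) = \Hom_Q(P_i, M_K)$, and the identity $\Hom_Q(P_i, K\otimes_k M) \cong K\otimes_k \Hom_Q(P_i, M)$ holds because each $P_i$ is finitely generated (it is immediate for $P_i = Q^{\oplus n_i}$). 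Taking cohomology gives the Ext statement, and in both cases naturality is a matter of tracking arrows.

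The main technical hurdle is the flatness of $\mcl Z \to Z_K$ used in constructing $P^K_\bullet$. Routing through the integral form decomposes this as $\mcl Z \to K\otimes_k\mcl Z \to Z_K$: the first map is flat because $k \to K$ is a field extension and $\mcl Z$ is $k$-flat, and the second is completion of a Noetherian ring at a maximal ideal, hence flat. This sidesteps any direct analysis of the completed extension $k\b{y_1,\ldots,y_n}\to K\b{y_1,\ldots,y_n}$. The remaining content---that $K\otimes_k(-)$ commutes with $\Hom_Q(P_i,-)$ on finitely generated modules, and with homology by flatness of $K/k$---is purely formal.
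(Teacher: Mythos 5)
Your proof is correct and follows essentially the same route as the paper's. The paper's own argument (which is quite terse) says to employ the integral form $\mcl{Q}$, note that Tor and Ext over $\mcl{Q}$ agree with those over $Q$ for $R$-modules, and invoke the classical base change formulae over the finite-type algebra $\mcl{Q}$ --- precisely the content of your argument, with the flatness of $\mcl{Z}\to Z$ and $\mcl{Z}\to Z_K$ doing the work of transporting a single resolution of $V$ over (the localization of) $\mcl{Q}$ to resolutions over $Q$ and $Q_K$ simultaneously. One small remark: since $\mcl{Q}$ is Noetherian, the base change $\mcl{Q}\to Q$ is already flat, so the initial localization at $1\in\Spec(\mcl{Z})$ is not strictly required here (it matters in the paper's Lemma \ref{lem:505}, where one also wants the resolution to be \emph{finite}); but including it is harmless and matches the paper's phrasing.
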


\begin{proof}
As in the proof of Lemma \ref{lem:505}, one employs the integral form $\mcl{Q}$, notes that $\Tor$ and $\Ext$ over $\mcl{Q}$ agree with those of $Q$ (for $R$-modules), and recalls that such base change formulae are valid over $\mcl{Q}$.
\end{proof}

\subsection{Representative independence over hypersurfaces}
\label{sect:rep_indep}

We have the following infinite analog of \cite[Corollary 5.3]{negronpevtsova}.  The proof is taken almost directly from \cite{avramoviyengar18}, and so is delayed until the appendix.

\begin{theorem}[{cf.\ \cite[Theorem 2.1]{avramoviyengar18}}]\label{thm:AI}
Let $Q\to R$ be a deformation as in Section \ref{sect:setup}, and assume that $Q$ is of finite global dimension.  Consider $M$ any $R$-module, and $f,g\in m_Z$ with equivalent, nonzero, classes $\bar{f}=\bar{g}$ in $m_Z/m_Z^2$.  Then there is an identification of graded vector space
\[
\Tor_\ast^{Q/(f)}(\Lambda,M)=\Tor_\ast^{Q/(g)}(\Lambda,M).
\]
Similarly, there is an identification
\[
\Ext^\ast_{Q/(f)}(\Lambda,M)=\Ext^\ast_{Q/(g)}(\Lambda,M).
\]
In particular, these (co)homology groups vanish in high degree over $Q/(f)$ if and only if they vanish in high degree over $Q/(g)$.
\end{theorem}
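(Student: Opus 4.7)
The plan is to mirror the continuity argument of Avramov-Iyengar \cite{avramoviyengar18}, interpolating between $f$ and $g$ by a one-parameter family and exploiting the central structure provided by $Z$ to control the resulting variation of homological invariants. Since $\bar f = \bar g$ in $m_Z/m_Z^2$, the difference $h := g - f$ lies in $m_Z^2$, so the hypersurface algebras $Q/(f)$ and $Q/(g)$ differ only in second-order terms.

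The first step is to set up the interpolation. Form the polynomial extension $Q[t]$ with its central subalgebra $Z[t]$, and consider $F := f + th \in Z[t]$. Since $\bar f$ is nonzero in the regular local ring $Z$, the element $f$ is regular in $Z$, and hence in $Q$ by flatness of $Q$ over $Z$; a standard degree-in-$t$ argument then shows that $F$ is regular in $Z[t]$ and so in $Q[t]$. Set $\tilde Q := Q[t]/(F)$, a flat family of hypersurface algebras over $k[t]$ whose fibers at $t=0$ and $t=1$ recover $Q/(f)$ and $Q/(g)$ respectively. View $M[t] := M \otimes_k k[t]$ as a module over $R[t]$, hence over $\tilde Q$ via restriction along $\tilde Q \to R[t]$. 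A relative version of Lemma \ref{lem:base_change}, applied to the closed fibers at $t = 0, 1$, then yields specialization isomorphisms
\[
k \otimes^L_{k[t]} \Tor_\ast^{\tilde Q}(\Lambda[t], M[t]) \;\cong\; \Tor_\ast^{Q/(f)}(\Lambda, M),
\]
and analogously at $t=1$ for $Q/(g)$, together with parallel identifications for Ext.

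The core step, and the main obstacle, is to show that the intermediate groups $\Tor_\ast^{\tilde Q}(\Lambda[t], M[t])$ and $\Ext^\ast_{\tilde Q}(\Lambda[t], M[t])$ are \emph{free} $k[t]$-modules in each homological degree; once this is established, the two specialization isomorphisms at $t=0$ and $t=1$ combine to give the asserted identification of graded vector spaces. In the commutative setting of \cite{avramoviyengar18}, freeness is obtained through a delicate analysis of minimal resolutions in combination with the action of the cohomological operators. In our framework the analogous central structure is supplied by the $A_{Z[t]}$-action on $D(\tilde Q)$ from Section \ref{sect:sm00th}, applied to the family $\tilde Q$; the hypothesis that $f, g \in Z$ are central is precisely what allows these operators to be imported from the commutative framework without modification. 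The bulk of the technical work will go into reconciling the formal nature of $Z$ with the algebraic nature of $k[t]$, so that completion and base change can be performed simultaneously without losing exactness; this is exactly the role played by the integral form hypothesis of Section \ref{sect:setup2}, which permits us to move freely between the finite type avatar $\mcl{Q}$ and its completion $Q$ when verifying the freeness and specialization statements above.
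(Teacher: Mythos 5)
Your proposal does not follow the route of the paper (or of Avramov--Iyengar, despite your attribution), and it contains a gap that is essentially the whole content of the theorem. The paper proves Theorem~\ref{thm:AI} by a noncommutative version of the Tate construction: one builds a dg algebra resolution $T_Q\to\Lambda$ over $Q$ containing $\mcl{K}_Z$, forms the Tate resolution $\msc{T}_f=\overline{T}_Q\langle y:d(y)=z_f\rangle$ over $Q/(f)$, and observes that after reduction $k\ot_Z(-)$ the bounding element $z_f$ becomes literally equal to $z_g$ in $k\ot_Z\mcl{K}_Z$ (because $f-g\in m_Z^2$ forces $z_f-z_g$ to lie in $m_Z\mcl{K}_Z+B(\mcl{K}_Z)$, and the differential on $k\ot_Z\mcl{K}_Z$ is zero). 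Hence $\overline{\msc{T}}_f=\overline{\msc{T}}_g$ as dg $R$-bimodules, and $\Lambda\ot^{\rm L}_{Q/(f)}M=\overline{\msc{T}}_f\ot_R M=\overline{\msc{T}}_g\ot_R M=\Lambda\ot^{\rm L}_{Q/(g)}M$ on the nose. There is no interpolation parameter and no flatness-over-$k[t]$ argument anywhere in this.

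Your interpolation scheme has the right setup (flatness of $\tilde{Q}=Q[t]/(f+th)$ over $k[t]$ is fine, since $f+ah$ stays part of a regular system of parameters for $Z$ as $h\in m_Z^2$), but you identify the ``core step'' --- freeness of $\Tor_\ast^{\tilde Q}(\Lambda[t],M[t])$ over $k[t]$ in each homological degree --- and then do not prove it. This step \emph{is} the theorem: flatness over $k[t]$ is exactly what forbids the fiberwise Tor dimensions from jumping as the hypersurface varies in a fixed linear system, and there is no independent mechanism offered for it. For infinite-dimensional $M$ you cannot invoke generic flatness or constructibility, and the $A_{Z[t]}$-operators you appeal to are not covered by Section~\ref{sect:sm00th}, which requires the parametrizing algebra to be a complete local power series ring over the base field; $Z[t]$ is neither local nor complete, so that machinery does not literally apply. (The integral-form hypothesis also does not help here --- its role in the paper is to make base change to field extensions $K$ behave well, not to mediate between a formal $Z$ and a polynomial parameter $t$.) Finally, the specialization statement as written, $k\ot^{\rm L}_{k[t]}\Tor_\ast^{\tilde Q}(\Lambda[t],M[t])\cong\Tor_\ast^{Q/(f)}(\Lambda,M)$, is only the $E_2\Rightarrow E_\infty$ collapse of a universal-coefficient spectral sequence; that collapse again presupposes the flatness you have not established, so the argument is circular. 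The Tate-construction route avoids all of this by producing a \emph{single} reduced resolution $\overline{\msc{T}}_f\ot_R M$ that visibly depends only on $\bar f\in m_Z/m_Z^2$.
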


Of course, the hypersurface algebra $Q/(f)$ depends on $f$ only up to nonzero scaling, $Q/(f)=Q/(\zeta f)$ for any $\zeta\in k^\times$.  So, to phrase things slightly differently, the claimed identification of (co)homology groups holds whenever the reductions $\bar{f}$ and $\bar{g}$ represent the same point in projective space $\mbb{P}(m_Z/m_Z^2)$.

\subsection{Hypersurface support}

Fix now $R$ a finite-dimensional algebra with prescribed deformation $Q\to R$ as in Section \ref{sect:setup}, and suppose that $Q$ is of {\bf finite global dimension}.
\par

Consider a point $c:\Spec(K)\to \mbb{P}(m_Z/m_Z^2)$, which we may view as a map into the base change $\Spec(K)\to \mbb{P}(m_Z/m_Z^2)_K=\mbb{P}(m_{Z_K}/m_{Z_K}^2)$.  We say a hypersurface algebra $Q_K/(f)$ is a \emph{hypersurface representative} for $c$ if the class of $f$ in $m_{Z_K}/m_{Z_K}^2$ lies in the punctured line defined by $c$.  As a corollary to Theorem \ref{thm:AI} we have

\begin{corollary}\label{cor:836}
Consider $Q_K/(f)$ and $Q_K/(g)$ two hypersurface representatives for a given point $c:\Spec(K)\to \mbb{P}(m_Z/m_Z^2)$.  Then for any $R$-module $M$, the base change $M_K$ is of finite projective dimension over $Q_K/(f)$ if and only if $M_K$ is of finite projective dimension over $Q_K/(g)$.
\end{corollary}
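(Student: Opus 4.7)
The plan is to reduce the statement to an application of Theorem~\ref{thm:AI} and Corollary~\ref{cor:g_dims}, once we have passed everything to the base change $Q_K\to R_K$. Specifically, I would argue that finite projective dimension of $M_K$ over either hypersurface is governed by the vanishing of $\Ext_{Q_K/(-)}^*(\Lambda_K,M_K)$ in high degree, and then invoke Theorem~\ref{thm:AI} to see that this vanishing is insensitive to the choice of representative of $c$.

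First, by Lemma~\ref{lem:505} the base change $Q_K\to R_K$ is again a deformation of the type required in Section~\ref{sect:setup}, with $Q_K$ Noetherian and of finite global dimension, and $R_K$ finite-dimensional. Since the hypersurface algebra $Q_K/(f)$ is unchanged under scaling $f$ by a nonzero element of $K$, and both $f$ and $g$ project to the same punctured line in $m_{Z_K}/m_{Z_K}^2$, we may rescale to assume $\bar f=\bar g$ in $m_{Z_K}/m_{Z_K}^2$. Now applying Theorem~\ref{thm:AI} to the deformation $Q_K\to R_K$ and the $R_K$-module $M_K$ gives a graded identification
\[
\Ext_{Q_K/(f)}^*(\Lambda_K,M_K)\;=\;\Ext_{Q_K/(g)}^*(\Lambda_K,M_K).
\]

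Next, I would verify the torsion and Gorenstein hypotheses needed to turn this identification of Ext groups into a statement about projective dimension. The algebras $Q_K/(f)$ and $Q_K/(g)$ are Gorenstein by Lemma~\ref{lem:gorenstein}, since $f$ (respectively $g$) is a nonzero element of the maximal ideal of the regular local ring $Z_K$ with nonzero image in $m_{Z_K}/m_{Z_K}^2$, hence a regular element. Moreover, $M_K$ is an $R_K$-module, and $R_K=Q_K/m_{Z_K}Q_K$, so $m_{Z_K}$ annihilates $M_K$. Since $m_{Z_K}$ is contained in $\operatorname{Jac}(Q_K/(f))$ and $\operatorname{Jac}(Q_K/(g))$, we conclude that $M_K$ is $\operatorname{Jac}(Q_K/(f))$-torsion, and likewise over $Q_K/(g)$.

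With these hypotheses in place, Corollary~\ref{cor:g_dims} applies over both $Q_K/(f)$ and $Q_K/(g)$, and tells us that finite projective dimension of $M_K$ over $Q_K/(f)$ is equivalent to the vanishing of $\Ext_{Q_K/(f)}^{\gg 0}(\Lambda_K,M_K)$, and similarly for $g$. Combined with the identification of Ext groups obtained from Theorem~\ref{thm:AI}, the two vanishing conditions are equivalent, and the corollary follows. I do not foresee a genuine obstacle here, as all of the work has been loaded into Theorem~\ref{thm:AI} and the detection result Corollary~\ref{cor:g_dims}; the only mildly delicate point is checking that $M_K$ satisfies the torsion hypothesis over the hypersurface quotients, which is immediate from $R_K=Q_K/m_{Z_K}Q_K$.
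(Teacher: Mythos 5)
Your proposal is correct and follows exactly the route taken in the paper: rescale so that $\bar f=\bar g$ in $m_{Z_K}/m_{Z_K}^2$, apply Theorem~\ref{thm:AI} to the base-changed deformation $Q_K\to R_K$, and then translate the resulting identification of Ext groups into a statement about finite projective dimension via Corollary~\ref{cor:g_dims}. The paper's proof is a one-liner that leaves the verification of the torsion and Gorenstein hypotheses implicit; your write-up supplies those checks explicitly (via Lemmas~\ref{lem:505} and~\ref{lem:gorenstein} and the observation that $m_{Z_K}$ annihilates $M_K$), which is a fine and faithful elaboration of the same argument.
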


\begin{proof}
After rescaling if necessary, we may assume that $f$ and $g$ define the same class in $m_{Z_K}/m_{Z_K}^2$.  The result now follows by Corollary \ref{cor:g_dims} and Theorem \ref{thm:AI}.
\end{proof}

\begin{remark}\label{rem:944}
A subtle point here is that Corollary \ref{cor:836} allows us to always assume that $f\in m_Z$ lies in the integral form $\mcl{Z}$, when desired, so that the hypersurface deformation $Q/(f)$ can always be assumed to have an integral form $\mcl{Q}/(f)$.
\end{remark}

When the distinction is unimportant we take, for a given point $c:\Spec(K)\to \mbb{P}(m_Z/m_Z^2)$,
\[
Q_c=Q_K/(f)\ \text{for any representative $f$ of the point }c.
\]
By the above lemma, the kernel of the functor
\begin{equation}\label{eq:Fc}
F_c:\Sing(R)\to \Sing(Q_c),
\end{equation}
defined as base change $(-)_K:\Sing(R)\to \Sing(R_K)$ composed with restriction \eqref{eq:402}, is independent of the choice of representative for the point $c$.

\begin{lemma}\label{lem:451}
Consider any field extension $K\to K'$, and $f\in m_{Z_K}$ with non-trivial reduction $\bar{f}\in (m_Z/m_Z^2)_K$.  Then for any $R$-module $M$, $M_K$ is of finite projective dimension over $Q_K/(f)$ if and only if $M_{K'}$ is of finite projective dimension over $Q_{K'}/(f)$.
\end{lemma}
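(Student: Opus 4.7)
The plan is to convert the statement ``$M_K$ has finite projective dimension over $Q_K/(f)$'' into a cohomological vanishing statement involving $\Tor$ against the semisimple quotient, and then use the base-change isomorphism of Lemma \ref{lem:base_change} to transport this vanishing faithfully between $K$ and $K'$. The first move is a harmless reduction: by Remark \ref{rem:944} (which relies on Corollary \ref{cor:836}), we may replace $f$ by any other representative of the same line in $(m_Z/m_Z^2)_K$ without affecting finite projective dimension of $M_K$ over $Q_K/(f)$, and in particular we may arrange that $f$ actually lies in the integral form $\mcl{Z}_K \subset Z_K$. With this choice, the hypersurface deformation $Q_K/(f) \to R_K$ itself satisfies the hypotheses of Section \ref{sect:setup}, carrying the integral form $\mcl{Q}_K/(f) \to R_K$.

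Next, Lemma \ref{lem:gorenstein} implies that $Q_K/(f)$ is Gorenstein. Since $M$ is an $R$-module it is annihilated by the image of $m_Z$ in $Q$, so $M_K$ is annihilated by the image of $m_{Z_K}$ and is in particular $\operatorname{Jac}(Q_K/(f))$-torsion. Corollary \ref{cor:g_dims} therefore delivers the equivalence
\begin{equation*}
\operatorname{projdim}_{Q_K/(f)}(M_K) < \infty \ \Longleftrightarrow\ \Tor_{\gg 0}^{Q_K/(f)}(\Lambda_K, M_K) = 0,
\end{equation*}
and the analogous equivalence holds after further base change to $K'$. I would then apply Lemma \ref{lem:base_change} to the deformation $\mcl{Q}_K/(f) \to R_K$ with the finite-dimensional $R_K$-module $\Lambda_K$, the arbitrary $R_K$-module $M_K$, and the extension $K \to K'$, producing natural isomorphisms
\begin{equation*}
K' \otimes_K \Tor_i^{Q_K/(f)}(\Lambda_K, M_K) \ \cong\ \Tor_i^{Q_{K'}/(f)}(\Lambda_{K'}, M_{K'})
\end{equation*}
for every $i$. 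Since $K \to K'$ is faithfully flat, each Tor-group on the left vanishes precisely when the corresponding group on the right vanishes; hence vanishing in all sufficiently large degrees is equivalent on the two sides, and chaining these three equivalences completes the proof.

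The main technical point is the very first step. Lemma \ref{lem:base_change} presupposes an integral form on the deformation in play, and a priori the element $f \in m_{Z_K}$ cutting out our hypersurface need not sit inside $\mcl{Z}_K$. This is exactly the obstruction dissolved by Remark \ref{rem:944}, after which the argument collapses to a routine combination of Lemma \ref{lem:gorenstein}, Corollary \ref{cor:g_dims}, and Lemma \ref{lem:base_change}.
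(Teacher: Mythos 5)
Your proof is correct and matches the paper's: the paper's proof of Lemma \ref{lem:451} is a one-line invocation of Lemma \ref{lem:base_change}, Remark \ref{rem:944}, and Corollary \ref{cor:g_dims}, which is precisely the chain you unpack (use Remark \ref{rem:944} to put $f$ in the integral form, use Gorensteinness and Corollary \ref{cor:g_dims} to reduce to $\Tor$-vanishing, then apply the base-change isomorphism and faithful flatness of $K \to K'$). You have simply written out the details that the paper leaves implicit.
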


\begin{proof}
The result follows by the base change formula of Lemma \ref{lem:base_change} (Remark \ref{rem:944}) and Corollary \ref{cor:g_dims}.
\end{proof}

Lemma \ref{lem:451} says that if a given point $c':\Spec(K')\to \mbb{P}$ factors through some other point
\[
\Spec(K')\to \Spec(K)\overset{c}\to \mbb{P}(m_Z/m_Z^2),
\]
then the kernels of the two functors $F_c:\Sing(R)\to \Sing(Q_c)$ and $F_{c'}:\Sing(R)\to \Sing(Q_{c'})$ agree.

\begin{definition}\label{def:supp_hyper}
We say an $R$-module $M$ is supported at a given point $c:\Spec(K)\to \mbb{P}(m_Z/m_Z^2)$ if $M$ is \emph{not} in $\ker\{F_c: \Sing(R)\to \Sing(Q_c)\}$.  Equivalently, $M$ is supported at $c$ if $M_K$, considered as a $Q_c$-module via the deformation $Q_c\to R_K$, is of infinite projective dimension over $Q_c$.  We take
\[
\supp^{hyp}_\mbb{P}(M):=\left\{
\begin{array}{c}
\text{the image of all points }c:\Spec(K)\to \mbb{P}(m_Z/m_Z^2)\\
\text{at which }\operatorname{projdim}_{Q_c}(M_K)=\infty
\end{array}\right\}.
\]
\end{definition}

Corollary \ref{cor:836} and Lemma \ref{lem:451} tell us that there are no ambiguities in the above definition.  We note that the support $\supp^{hyp}_\mbb{P}(M)$ is a subset in $\mbb{P}(m_Z/m_Z^2)$ which is not necessarily closed.  We see below that $\supp^{hyp}_\mbb{P}(M)$ is closed whenever $M$ is finite-dimensional.

\subsection{Support for finite-dimensional modules}
\label{sect:fd}

Recall our deformation algebra $A_Z=\Sym(\Sigma^{-2}(m_Z/m_Z^2)^\ast)$, from Section \ref{sect:sm00th}, and its corresponding action on $D^b(R)$.
\par

We have $\Proj(A_Z)=\mbb{P}(m_Z/m_Z^2)$ and the natural $A_Z$-action on extensions provides, for any $R$-modules $N$ and $M$, an associated quasicoherent sheaf $\Ext^\ast_R(N,M)^\sim$ on $\mbb{P}(m_Z/m_Z^2)$.  By \cite[Corollary 4.7]{negronpevtsova} these extension sheaves are \emph{coherent} when $N$ and $M$ are finite-dimensional.

\begin{proposition}\label{prop:493}
For a finite-dimensional $R$-module $V$ over $R$ we have
\[
\supp^{hyp}_\mbb{P}(V)=\Supp_\mbb{P}\Ext^\ast_R(V,\Lambda)^\sim.
\]
In particular, the support of a finite-dimensional $R$-module is closed in $\mbb{P}(m_Z/m_Z^2)$.
\end{proposition}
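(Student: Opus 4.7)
My plan is to prove the desired equality pointwise, using base change to reduce to rational points and then invoking a Koszul-duality identification from \cite{negronpevtsova} to match the two conditions; closedness of the support then follows automatically from coherence of the extension sheaf \cite[Corollary 4.7]{negronpevtsova}.

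For the base-change reduction, I would fix $c:\Spec(K) \to \mbb{P}(m_Z/m_Z^2)$ and pass to the base-changed picture over $K$. By Lemma \ref{lem:base_change}, $K \ot_k \Ext^*_R(V,\Lambda)$ is canonically identified with $\Ext^*_{R_K}(V_K,\Lambda_K)$ as a graded $A_{Z_K}$-module, so the pullback of $\Ext^*_R(V,\Lambda)^\sim$ to $\mbb{P}(m_{Z_K}/m_{Z_K}^2)$ is the coherent sheaf associated to $V_K$. Thus membership in $\Supp_\mbb{P}\Ext^*_R(V,\Lambda)^\sim$ is stable under the natural map $\mbb{P}(m_{Z_K}/m_{Z_K}^2) \to \mbb{P}(m_Z/m_Z^2)$, and Lemma \ref{lem:451} gives the corresponding statement for $\supp^{hyp}_\mbb{P}(V)$. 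It then suffices to verify the equality at $K$-rational points of $\mbb{P}(m_{Z_K}/m_{Z_K}^2)$, with $R$ and $V$ replaced by $R_K$ and $V_K$.

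At such a rational point $c$ with representative $f \in m_{Z_K}$, I would translate both conditions to a common statement about the $A_{Z_K}$-action on extensions. Coherence of $\Ext^*_{R_K}(V_K,\Lambda_K)^\sim$ and the standard Proj description of support give that $c \notin \Supp$ iff some $\chi \in A_{Z_K}^2$ with $\chi(\bar f) \neq 0$ acts nilpotently on $\Ext^{\gg 0}_{R_K}(V_K,\Lambda_K)$. On the other hand, Corollary \ref{cor:g_dims} applied to the Gorenstein algebra $Q_K/(f)$ shows that $c \notin \supp^{hyp}_\mbb{P}(V_K)$ iff $\Ext^{\gg 0}_{Q_K/(f)}(V_K,\Lambda_K) = 0$. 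The bridge between these two conditions is the main content: I would invoke the Koszul-duality results from \cite{negronpevtsova} to identify the action of such a $\chi \in A_{Z_K}^2$ with the Eisenbud-type operator arising from the hypersurface $Q_K/(f)$, so that $\chi$-nilpotence of $\Ext^{\gg 0}_{R_K}(V_K,\Lambda_K)$ matches finiteness of $\Ext^*_{Q_K/(f)}(V_K,\Lambda_K)$. This completes the pointwise equivalence, and closedness of $\supp^{hyp}_\mbb{P}(V)$ is then immediate from coherence.

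The principal obstacle is this last Koszul-duality step. While \cite{negronpevtsova} establishes an analogous identification in the finite-dimensional module setting, I would need to carefully match the categorical $A_Z$-action of \cite{bezrukavnikovginzburg07} with the hypersurface-theoretic data: concretely, that a degree-two element of $A_Z$ evaluating nontrivially on $\bar f$ induces the Eisenbud operator for $Q/(f)$ on extensions. This is precisely the ``expected" identification between the two notions of hypersurface support advertised in the introduction, and I anticipate it will require some care in matching conventions across the two sources (degree shifts, and the dualities between $m_Z/m_Z^2$ and its dual that are built into the definition of $A_Z$).
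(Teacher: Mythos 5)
Your proposal follows essentially the same route as the paper's proof: reduce to geometric points via Lemma~\ref{lem:451} and the base-change formula of Lemma~\ref{lem:base_change}, translate $c \notin \supp^{hyp}_\mbb{P}(V)$ into $\Ext^{\gg 0}_{Q_K/(f)}(V_K,\Lambda_K)=0$ via Corollary~\ref{cor:g_dims}, invoke the Koszul-duality identification from \cite{negronpevtsova} (specifically Lemma~6.8 there, which is the exact lemma you're looking for) to equate this with the vanishing of the fiber of the extension sheaf at $c$, and deduce closedness from coherence \cite[Corollary~4.7]{negronpevtsova}. The step you flag as the ``principal obstacle'' is not really an obstacle in this setting: the module $V$ is finite-dimensional, so the results of \cite{negronpevtsova} apply directly with no further convention-matching needed, and the paper indeed simply cites that lemma as a black box.
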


In the proof we employ the following construction: To any point $c:\Spec(K)\to \mbb{P}(m_Z/m_Z^2)$ we have an associated graded algebra map $\phi_c:A_Z\to K[t]$ which is nonzero in degree $2$ and annihilates the kernel of $c:A_Z^2=(m_Z/m_Z^2)^\ast\to K$.  We also consider the base change $\phi_{c,K}:(A_Z)_K\to K[t]$, which is now a graded algebra surjection.  Note that although the map $c$ is really only defined up to scaling, its kernel is well-defined and uniquely determined by the point $c$.  Similarly, the map $\phi_c$ is uniquely determined up to the $K^\times$-action on $K[t]$ by graded automorphisms.

\begin{proof}
Consider an arbitrary point $c:\Spec(K)\to \mbb{P}$.  We want to show that $V$ is supported at $c$ if and only if the image $c(pt)\in \mbb{P}$ of $c$ is in the support of the graded $A_Z$-module $\Ext^\ast_R(V,\Lambda)$.  By Lemma \ref{lem:451} we are free to assume that $c$ is a geometric point, that is, that $K=\overline{K}$.  (We make this assumption to conform to the setting of \cite{negronpevtsova}.)
\par

By \cite[Lemma 6.8]{negronpevtsova}, we have $\Ext^{\gg 0}_{Q_K/(f)}(V_K,\Lambda_K)=0$ if and only if the fiber of
\[
\Ext^\ast_{R_K}(V_K,\Lambda_K)=K\ot\Ext^\ast_R(V,\Lambda)
\]
along the localized map $\phi_{c,K}:K\ot A_Z\to K[t,t^{-1}]$ vanishes.  So, $V$ is supported at the $K$-point $c$, in the sense of Definition \ref{def:supp_hyper}, if and only if the pullback $\pi^\ast\Ext^\ast_R(V,\Lambda)^\sim$ along the projection $\pi:\mbb{P}_K\to\mbb{P}$ is supported at $c$.  Since, for any coherent sheaf $\mcl{F}$ on $\mbb{P}$,
\[
\Supp(\pi^\ast\mcl{F})=\pi^{-1}\Supp(\mcl{F})
\]
\cite[Tag \href{https://stacks.math.columbia.edu/tag/056H}{056H}]{stacks}, we see that $V$ is supported at $c$ if and only if $\Ext^\ast_R(V,\Lambda)^\sim$ is supported at $c(pt)$, as desired. 
\end{proof}

\begin{remark}
By Proposition \ref{prop:493} and \cite[Corollary 6.10]{negronpevtsova} we see that the hypersurface support of the present paper agrees with that of its predecessor \cite[Definition 6.7]{negronpevtsova}.
\end{remark}

\section{The detection theorem}
\label{sect:detection}

Fix for this section $R$ is a finite-dimensional algebra which is equipped with a deformation $Q\to R$ as in Section \ref{sect:setup}, and we assume additionally that $Q$ is of {\bf finite global dimension}.  We define the support $\supp^{hyp}_\mbb{P}(M)$ of $R$-modules via this given deformation (Definition \ref{def:supp_hyper}). The point of this final section of the text is to prove the following detection theorem.

\begin{theorem}\label{thm:detection}
For $M$ any $R$-module, $\supp^{hyp}_\mbb{P}(M)=\emptyset$ if and only if $M$ has finite projective dimension over $R$.  That is to say, hypersurface support for $M$ vanishes if and only if $M$ vanishes in the singularity category.
\end{theorem}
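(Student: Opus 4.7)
The plan is to split the equivalence into two directions. The easy ``only if'' direction proceeds by a routine change-of-rings argument: if $\operatorname{projdim}_R(M) < \infty$ then, for any point $c:\Spec(K)\to \mbb{P}(m_Z/m_Z^2)$, base change gives $\operatorname{projdim}_{R_K}(M_K)<\infty$. By Lemma \ref{lem:505} the algebra $Q_K$ has finite global dimension, so $R_K$ has finite projective dimension over $Q_K$. A representative $f\in m_{Z_K}$ is a non-zero-divisor in $Q_K$, and the standard change-of-rings formula across a non-zero-divisor (projective dimension drops by exactly one when passing from $Q_K$ to $Q_c = Q_K/(f)$) forces $\operatorname{projdim}_{Q_c}(R_K)<\infty$. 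Composing a finite $R_K$-projective resolution of $M_K$ with a finite $Q_c$-projective resolution of $R_K$ shows $\operatorname{projdim}_{Q_c}(M_K)<\infty$, so $c\notin \supp^{hyp}_\mbb{P}(M)$.

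For the reverse implication I would first translate both sides into cohomological conditions. Because $R$ is finite-dimensional, $\operatorname{Jac}(R)$ is nilpotent and every $R$-module is automatically $\operatorname{Jac}(R)$-torsion. Since $R$ is Gorenstein (Lemma \ref{lem:gorenstein}), Corollary \ref{cor:g_dims} reduces the goal $\operatorname{projdim}_R(M)<\infty$ to the single condition $\Ext^{\gg 0}_R(\Lambda, M)=0$. The same corollary, applied to the Gorenstein hypersurface $Q_c$ and the $\operatorname{Jac}(Q_c)$-torsion module $M_K$ (the image of $\operatorname{Jac}(Q_c)$ in $R_K$ lies in the nilpotent ideal $\operatorname{Jac}(R_K)$), reformulates the hypothesis $\supp^{hyp}_\mbb{P}(M)=\emptyset$ as the assertion that $\Ext^{\gg 0}_{Q_c}(\Lambda_K, M_K)=0$ for every point $c$ of $\mbb{P}(m_Z/m_Z^2)$.

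Next, I would exploit the Koszul duality results from \cite{negronpevtsova} to compare $\Ext^*_{Q_c}(\Lambda_K, M_K)$ with $\Ext^*_R(\Lambda, M)$. The graded vector space $\Ext^*_R(\Lambda, M)$ carries a natural $A_Z$-action through the central map $\iota_Q$ of \eqref{eq:204}. For a finite-dimensional module $V$ this setup underlies the proof of Proposition \ref{prop:493} via \cite[Lemma 6.8]{negronpevtsova}: vanishing of $\Ext^{\gg 0}_{Q_c}(V_K,\Lambda_K)$ is equivalent to vanishing of a fiber of $\Ext^*_R(V,\Lambda)_K$ along the graded algebra map $\phi_{c,K}:(A_Z)_K\to K[t]$. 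I would expect an analogous identification to hold with the roles of source and target swapped, and without any finite-dimensionality assumption on $M$, so that the high-degree vanishing of $\Ext^*_{Q_c}(\Lambda_K, M_K)$ corresponds to high-degree vanishing of an appropriate fiber of $\Ext^*_R(\Lambda, M)_K$. Granting this identification, the hypothesis supplies such fiberwise vanishing at every $K$-point of $\mbb{P}(m_Z/m_Z^2)$, generic points included.

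The main obstacle is the final detection step: passing from pointwise vanishing over $\mbb{P}(m_Z/m_Z^2)$ to the honest vanishing $\Ext^{\gg 0}_R(\Lambda, M)=0$. When $M$ is finite-dimensional, the graded $A_Z$-module $\Ext^*_R(\Lambda, M)$ is finitely generated and one concludes immediately by the coherent-sheaf argument behind Proposition \ref{prop:493}. For general $M$ this finiteness fails, so a coherence-free detection principle is required. I expect to carry this out by feeding the problem into the Koszul duality dictionary of \cite{negronpevtsova}, realising $\Ext^*_R(\Lambda, M)$ as part of a dg-module over a polynomial ring and exploiting the fact that vanishing at every residue field, including the generic ones, forces the desired high-degree vanishing. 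This is precisely the infinite-dimensional input that distinguishes the present detection theorem from its finite-dimensional ancestor in \cite{negronpevtsova} and from the Kronecker-quiver argument used by Avramov--Buchweitz and Benson--Iyengar--Krause--Pevtsova in the commutative and group-algebra settings.
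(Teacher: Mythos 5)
Your overall plan tracks the paper's argument closely — translate both sides cohomologically via Corollary~\ref{cor:g_dims}, compare $\Ext_{Q_c}^*(\Lambda_K,M_K)$ with $\RHom_R(\Lambda,M)$ via Koszul duality, then detect boundedness of $\Ext_R^*(\Lambda,M)$ from fiberwise boundedness over $\mbb{P}(m_Z/m_Z^2)$. But the two steps you flag as ``expectations'' are exactly where the substantive work lies, and as written neither is secured.

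First, your phrase ``high-degree vanishing of an appropriate fiber of $\Ext^*_R(\Lambda,M)_K$'' is not the right object. The paper's Proposition~\ref{prop:KotExt} identifies $\RHom_{Q_c}(\Lambda_K,M_K)$ with the \emph{derived} fiber $K[t]\ot^{\rm L}_{A_Z}\RHom_R(\Lambda,M)$ of the dg $A_Z$-module $\RHom_R(\Lambda,M)$, not with a fiber of its cohomology. Cohomology does not commute with base change here, and this distinction is what makes the subsequent detection step viable: one must work throughout with the sheaf of dg modules $\mcl{F}_N$ on $\mbb{P}$, not with the associated graded sheaf $\Ext^*_R(\Lambda,M)^{\sim}$. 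Second, the detection principle itself — Theorem~\ref{thm:vanish}, that uniformly bounded cohomology of all derived fibers $K[t]\ot^{\rm L}_S N$ forces bounded cohomology of $N$ — is the genuinely new ingredient replacing coherence in the infinite case. Its proof is not ``vanishing at generic points wins''; it is an induction on the number of variables, using Neeman's lemma \cite[Lemma 2.12]{neeman92} to kill $H^*(\mcl{F}_N)$ and hence all localizations $H^*(N)_f$, then a long exact sequence argument with the regular element $f$ to bootstrap from $n-1$ to $n$ variables. Absent this theorem and Proposition~\ref{prop:KotExt}, your ``if'' direction is a plan, not a proof.

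A smaller point on the ``only if'' direction: the change-of-rings theorem across a non-zero-divisor runs the wrong way for your purposes. From $\operatorname{projdim}_{Q_K}(R_K)<\infty$ one cannot deduce $\operatorname{projdim}_{Q_c}(R_K)<\infty$ by that theorem — this implication fails in general (e.g.\ $k$ over a regular local ring modulo an element of $m^2$). The correct argument is the one implicit in the paper's construction of the restriction functor~\eqref{eq:402}: after a linear change of coordinates one may take $f=y_1$ modulo $m_{Z_K}^2$, so that $R_K$ is resolved over $Q_c$ by a truncated Koszul complex on the remaining parameters, giving the finite resolution directly, and hence a well-defined functor $\Sing(R_K)\to\Sing(Q_c)$ sending the zero object to zero.
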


The proof is essentially an application of (dg) commutative algebra.

\begin{remark}
In the case of finite-dimensional $M$, Theorem \ref{thm:detection} follows easily from the cohomological expression of Proposition \ref{prop:493}.
\end{remark}

\subsection{Dg modules and sheaves on $\mbb{P}$}

Fix $S=k[\xi_1,\dots, \xi_n]$ a polynomial ring in $n$ variables, and $\mbb{P}=\Proj(S)$.  We consider $S$ as a dg algebra, with $\deg(\xi_i)=2$ for all $i$ and vanishing differential.  Following standard notation we take $S_{(f)}$ the degree $0$ portion of the localization $S_f$, for homogenous $f$, and $N_{(f)}=(N_f)^{0}$ for any graded $S$-module $N$.  For a degree $2$ function $f\in S$, let $U_f\subset \mbb{P}$ denote the basic open $U_f=\Spec(S_{(f)})$.

For any dg $S$-module $N$ the differential $d:N\to N$ is a degree $1$ $S$-linear homomorphism, and each localization $N_f$ is a dg module over $S_f$.  We can consider each $N_f$ as a dg module over the (non-dg) ring $S_{(f)}$, in which case it becomes an unbounded complex of $S_{(f)}$-modules and we are in a classical (non-dg) situation.  The dg modules $N_f$ glue over the localizations $S_{(f)}$, $\deg(f)=2$, to produce a sheaf $\mcl{F}_N$ of quasi-coherent dg $\O_\mbb{P}$-modules on projective space $\mbb{P}=\Proj(S)$.  Indeed, as an $\O_\mbb{P}$-module $\mcl{F}_N$ is assembled from the usual sheaves associated to $N^{ev}$, $N^{odd}$, and their shifts.
\par

The above construction is natural in $N$, so that in total we obtain an exact functor from dg $S$-modules to dg sheaves on projective space.

\begin{definition}
We let $\mcl{F}_?:S\text{-dgmod}\to \operatorname{QCoh}_{dg}(\mbb{P})$ denote the exact functor constructed above, $N\mapsto \mcl{F}_N$.
\end{definition}

Consider a geometric point $c:\Spec(K)\to \mbb{P}$, which determines (up to $K^\times$-action) a graded surjection $\phi_c:S_K\to K[t]$, $\deg(t)=2$.  If this point lies in $U_f\subset \mbb{P}_K$, then $f\in S_K^2$ maps to a non-zero scaling of $t$, and we may generally assume that this scalar is $1$.

\begin{lemma}\label{lem:fibers}
Consider a geometric point $c:\Spec(K)\to \mbb{P}$, and a standard open $U_f\subset \mbb{P}_K$ containing $c$.  Then there is a natural isomorphism
\[
\mrm{L}c^\ast \mcl{F}_N\cong K[t,t^{-1}]\ot_{S_K}^{\rm L}N_K
\]
in $D(K)$, where we change base along the (localized) map $\phi_c:S_K\to K[t,t^{-1}]$.
\end{lemma}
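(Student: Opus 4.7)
The plan is to reduce the computation to a standard affine open $U_f \subset \mbb{P}_K$ containing $c$, and then to recognize both sides as derived base changes along essentially the same ring map.

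The point $c:\Spec(K)\to \mbb{P}$ factors as $\Spec(K) \overset{c'}{\to} \mbb{P}_K \to \mbb{P}$, where $c'$ is the associated $K$-point of $\mbb{P}_K$. Since the construction $N\mapsto \mcl{F}_N$ is natural in the pair $(S,N)$ and the pullback of $\mcl{F}_N$ along the flat projection $\mbb{P}_K \to \mbb{P}$ is $\mcl{F}_{N_K}$ on $\mbb{P}_K=\Proj(S_K)$, we have $\mrm{L}c^\ast \mcl{F}_N \cong \mrm{L}(c')^\ast \mcl{F}_{N_K}$. Since $c'$ lies in $U_f=\Spec(S_{K,(f)})\subset \mbb{P}_K$, and $\mcl{F}_{N_K}|_{U_f}$ is by construction the $\O_{U_f}$-module complex associated to the dg $S_{K,(f)}$-module $(N_K)_f$, the map $c'$ is given on rings by the degree-zero component $S_{K,(f)} \to K$ of the (localized) surjection $\phi_c:(S_K)_f \to K[t,t^{-1}]$. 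Therefore
\[
\mrm{L}c^\ast \mcl{F}_N \;\cong\; K \ot^{\mrm{L}}_{S_{K,(f)}} (N_K)_f
\]
in $D(K)$.

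To identify this with $K[t,t^{-1}] \ot^{\mrm{L}}_{S_K} N_K$, I would observe that $(S_K)_f$ is isomorphic to the graded Laurent polynomial algebra $S_{K,(f)}[f^{\pm 1}]$ in the degree-$2$ variable $f$, and hence is free---in particular flat---over its degree-zero subring $S_{K,(f)}$. Moreover, via the identification $f\mapsto t$, we have $K[t,t^{-1}] \cong K \ot_{S_{K,(f)}} (S_K)_f$ as graded $K$-algebras. Combining these facts with associativity of the derived tensor product and with flatness of the Zariski localization $S_K \to (S_K)_f$ yields
\[
K \ot^{\mrm{L}}_{S_{K,(f)}} (N_K)_f \;\cong\; K[t,t^{-1}] \ot^{\mrm{L}}_{(S_K)_f} (N_K)_f \;\cong\; K[t,t^{-1}] \ot^{\mrm{L}}_{S_K} N_K,
\]
which is the claim.

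The main technical obstacle is the dg/graded bookkeeping: one has to verify that the description of $\mcl{F}_{N_K}|_{U_f}$ as ``the complex associated to $(N_K)_f$'' is literally correct at the level of dg sheaves, using the internal splitting $(N_K)_f = \bigoplus_{n \in \mbb{Z}} ((N_K)_f)^0 \cdot f^n$ which is available because multiplication by $f$ is invertible on $(N_K)_f$. Once this is secured, the displayed derived tensor product identities are standard consequences of flat base change, applied after replacing $N_K$ by a $K$-flat (or $S_K$-flat) dg resolution if necessary.
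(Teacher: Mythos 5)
Your proof is correct and follows essentially the same route as the paper's: reduce to a $K$-point on $\mbb{P}_K$ after flat base change, pass to a K-flat resolution so that derived and underived fibers agree, and then identify the fiber at $S_{(f)}\to K$ with the fiber of the localization $N_f$ along $\phi_c$. The only cosmetic difference is in the final step, where the paper writes down the two mutually inverse maps $\xi\ot m\mapsto \xi\ot m$ and $\xi t^n\ot m\mapsto\xi\ot\zeta^{-n}f^n m$ explicitly, while you replace this by observing that $S_f\cong S_{(f)}[f^{\pm1}]$ is free over $S_{(f)}$ and invoking associativity of derived tensor products — the same underlying fact packaged more abstractly.
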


\begin{proof}
By changing base initially we may assume $K=k$, so that $c$ is a closed point, and by replacing $N$ with a semi-projective resolution we may assume that $N$ is K-flat \cite[Definition 5.1]{spaltenstein88}. In this case the derived fiber and derived product are identified with their underived counterparts.  It suffices to provide an isomorphism $k\ot_{S_{(f)}}N_f\cong k[t,t^{-1}]\ot_{S}N$.  The latter space is isomorphic to $k[t,t^{-1}]\ot_{S_f}N_f$, since $f$ maps to a non-zero scaling of $t$, $\phi(f)=\zeta t$.
\par

We have the two maps
\[
k\ot_{S_{(f)}}N_f\to k[t,t^{-1}]\ot_{S_f}N_f,\ \ \xi\ot m\mapsto \xi\ot m
\]
and
\[
k[t,t^{-1}]\ot_{S_f}N_f\to k\ot_{S_{(f)}}N_f,\ \ \xi t^n\ot m\mapsto \xi\ot \zeta^{-n}f^nm.
\]
One observes directly that these maps are mutually inverse, and so provide the desired isomorphism.
\end{proof}

\begin{lemma}\label{lem:168}
At all $f\in S^2$, $H^\ast(\mcl{F}_N)|_{U_f}\cong H^\ast(N)_f$.
\end{lemma}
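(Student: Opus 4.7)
The plan is straightforward: unwind the local construction of $\mcl{F}_N$ over $U_f$ and use the exactness of localization (equivalently, of the sheafification functor on the affine $U_f$) to commute cohomology past both restriction and localization. I would proceed in two main steps.

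First, I would record the local description: by construction, the restriction $\mcl{F}_N|_{U_f}$ is naturally isomorphic to the quasi-coherent dg sheaf on the affine scheme $U_f = \Spec(S_{(f)})$ associated to the dg $S_{(f)}$-module $N_f$, assembled from the usual sheaves associated to $N^{ev}$, $N^{odd}$, and their shifts, with differential inherited from $N$. This is essentially tautological, since $\mcl{F}_N$ was \emph{defined} by precisely such a gluing procedure over the basic opens indexed by degree-$2$ elements.

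Second, I would invoke the fact that on the affine scheme $U_f$ the functor $M \mapsto \widetilde{M}$ from $S_{(f)}$-modules to quasi-coherent sheaves is exact, and therefore commutes with taking cohomology of complexes. Combined with exactness of the localization $(-)_f$ applied to $N$ (so that $H^*(N_f) \cong H^*(N)_f$ as graded $S_f$-modules), this yields the chain
\[
H^*(\mcl{F}_N)|_{U_f} \;\cong\; H^*\bigl(\mcl{F}_N|_{U_f}\bigr) \;\cong\; \widetilde{H^*(N_f)} \;\cong\; \widetilde{H^*(N)_f},
\]
in which the first isomorphism uses exactness of restriction to an open, the second uses the preceding exactness point, and the third is exactness of localization. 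Under the standing notational convention identifying a graded module with its associated sheaf on $\mbb{P}$, this is precisely the identification asserted by the lemma.

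No serious obstacle arises here: the statement is a bookkeeping lemma underwriting later manipulations. The only point meriting care is keeping track of the internal $\mbb{Z}$-grading on $N$ alongside the cohomological grading on the resulting dg sheaf, since both are used in assembling $\mcl{F}_N$ from ordinary quasi-coherent sheaves on $\mbb{P}$; but this grading accounting is handled uniformly by the sheafification functor applied componentwise.
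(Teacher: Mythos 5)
Your proof is correct and follows essentially the same route as the paper's: identify $\mcl{F}_N|_{U_f}$ with the sheaf on the affine $U_f$ associated to $N_f$, then use exactness of the associated-sheaf functor on the affine and exactness of localization to commute $H^\ast$ past both. The paper merely spells out the last step by localizing the short exact sequences for cycles and boundaries, which is the content of your appeal to exactness of $(-)_f$.
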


\begin{proof}
We have $H^\ast(\mcl{F}_N)|_{U_f}=H^\ast(\mcl{F}_N|_{U_f})=H^\ast(N_f)$.  By exactness of localization, and the fact that the differential $d:N\to N$ is $S$-linear, we localize the exact sequences
\[
0\to Z(N)\to N\overset{d}\to N,\ \ N\overset{d}\to N\to N/B(N)\to 0
\]
to get exact sequences 
\[
0\to Z(N)_f\to N_f\overset{d}\to N_f,\ \ N_f\overset{d}\to N_f\to N_f/B(N)_f\to 0.
\]
This gives $Z(N)_f\cong Z(N_f)$, $B(N)_f\cong B(N_f)$, and therefore $H^\ast(N_f)\cong H^\ast(N)_f$.
\end{proof}

\subsection{A vanishing theorem}

\begin{theorem}\label{thm:vanish}
Suppose that $N$ is a bounded below dg $S$-module, and that at each geometric point $c:S\to K[t]$ the fiber $K[t]\ot^{\rm L}_S N$ has cohomology vanishing in degrees $>d$, for some (uniformly chosen) integer $d$.  Then $N$ has bounded cohomology.
\end{theorem}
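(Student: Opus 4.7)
The plan is to induct on $n$, the number of polynomial generators of $S$.  In the base case $n=1$, the graded surjection $\phi_c: S \to K[t]$ may be rescaled to an isomorphism, so $K[t]\ot^{\mathrm L}_S N \cong N$ and the conclusion follows directly from the hypothesis.

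For the inductive step, I would fix a linear form $l \in S^2$, set $S' := S/(l) \cong k[\xi_1,\ldots,\xi_{n-1}]$, and form the dg $S'$-module $N' := S' \ot^{\mathrm L}_S N$, which remains bounded below.  Any geometric point $\phi_{c'}: S' \to K[t]$ extends to a point $\phi_c: S \to K[t]$ via $S \twoheadrightarrow S'$, and associativity of the derived tensor gives $K[t] \ot^{\mathrm L}_{S'} N' \cong K[t] \ot^{\mathrm L}_S N$.  So $N'$ inherits the fiberwise hypothesis over $S'$, and by induction its cohomology vanishes above some integer $d'$.  The exact triangle $N \xrightarrow{l} N \to N'$ (with shifts absorbing the cohomological degree $2$ of $l$) then forces multiplication by $l$ to act as an isomorphism $H^j(N) \to H^{j+2}(N)$ for all $j \geq d'$.

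Applying this to each coordinate form $\xi_1,\ldots,\xi_n$ (with a uniform bound $D$ obtained by tracking the inductive argument), every $\xi_i$ acts invertibly on the graded module $M := H^{\geq D}(N)$ in its top degrees.  Graded Nakayama for bounded-below modules then forces $M$ to be $S$-generated by its pieces $M_D$ and $M_{D+1}$, and since the $\xi_i$-isomorphisms propagate these pieces forward, $M$ is either zero or unbounded above.  To rule out the latter, I would use the full strength of the hypothesis at varying geometric points: for the pencil of linear forms $\{a_1 \xi_1 + \cdots + a_n \xi_n\}$, invertibility at every $[a_1 : \cdots : a_n] \in \mathbb{P}^{n-1}(\bar k)$ forces the associated determinant polynomial in the $a_i$ to have no roots; yet in the finite-dimensional setting (which holds in the intended applications, where $N$ arises from finite-dimensional $\mathrm{Ext}$ groups), this is a nonconstant homogeneous polynomial and must vanish somewhere over $\bar k$, a contradiction.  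Hence $M_D = M_{D+1} = 0$, completing the proof.

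The main obstacle I anticipate is the final determinant step, which relies crucially on finite-dimensionality of each $H^j(N)$; an infinite-dimensional module like $k(x)$ with $\xi_1 = \mathrm{id}$ and $\xi_2 = $ multiplication by $x$ satisfies invertibility at every $c$ without vanishing, so some finiteness input is unavoidable.  A secondary technical point is ensuring the inductive bound $d'$ can be chosen uniformly in the linear form $l$, which should follow from a careful bookkeeping of the induction but requires attention to detail.
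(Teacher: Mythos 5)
Your inductive skeleton --- passing to $S' = S/(f)$, applying induction to $S'\ot^{\rm L}_SN$, and reading off from the long exact sequence that $f\cdot -: H^j(N)\to H^{j+2}(N)$ is an isomorphism for $j\gg 0$ --- matches the paper's. The gap is in how you finish. Your determinant argument needs each $H^j(N)$ finite-dimensional, which the theorem does not assume (and the application to Theorem \ref{thm:detection}, where $N = \RHom_R(\Lambda,M)$ for an arbitrary $R$-module $M$, genuinely requires the infinite-dimensional case). Your further claim that ``some finiteness input is unavoidable'' is also not right, and your proposed counterexample fails the hypothesis: take $N$ with zero differential and $H^\ast(N)\cong k(x)[t]$, $\xi_1 = t$, $\xi_2 = xt$, $k=\bar k$. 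Every closed-point fiber is bounded, but at the geometric point $\xi_1\mapsto t$, $\xi_2\mapsto st$ with $s\in K$ transcendental over $k$, the fiber has cohomology $K$ in every even degree. Your argument only ever invokes closed points of $\mbb{P}(\bar k)$; the hypothesis at non-closed geometric points carries real strength that your approach never touches.

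The paper's fix is a non-inductive preliminary step replacing your determinant argument. Boundedness of each fiber $K[t]\ot^{\rm L}_S N$ forces the $t$-localization $K[t,t^{-1}]\ot^{\rm L}_S N$ to be acyclic (since $t$ raises degree by $2$). By Lemma \ref{lem:fibers} this says the derived fiber $\mrm{L}c^\ast\mcl{F}_N$ of the sheafification $\mcl{F}_N$ on $\mbb{P}=\Proj(S)$ vanishes at every geometric point, and then Neeman's detection theorem \cite[Lemma 2.12]{neeman92} forces $\mcl{F}_N$ to be acyclic. Via Lemma \ref{lem:168} this gives $H^\ast(N)_f = 0$ for every $f\in S^2$. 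With that in hand a \emph{single} hyperplane closes the induction, with no finiteness and no uniformity bookkeeping: $H^{\gg 0}(N)$ is $f$-torsion-free (from the isomorphism you also derived), yet $f$-torsion (since $H^\ast(N)_f = 0$), hence zero.
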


To be clear, by a geometric point we mean a graded algebra map $S\to K[t]$, with $\deg(t)=2$ and $K=\bar{K}$, such that the base change $S_K\to K[t]$ is surjective.

\begin{proof}
The proof is by induction on the number of generators of $S$.  Before proceeding with the induction argument, we first claim that for $N$ as in the statement of the theorem, all of the localizations of cohomology $H^\ast(N)_f$, $f\in S^2$, vanish.
\par

Indeed, boundedness of the fibers $K[t]\ot^{\rm L}_S N$ implies that all of the localizations $K[t,t^{-1}]\ot^{\rm L}_S N$ vanish, and by Lemma \ref{lem:fibers} we conclude that all of the derived fibers $\mrm{L}c^\ast \mcl{F}_N$ of the associated sheaf vanish, at all geometric points $c:\Spec(K)\to \mbb{P}$.  By Neeman \cite[Lemma 2.12]{neeman92}, this implies that $H^\ast(\mcl{F}_N)=0$, and by Lemma \ref{lem:168} we conclude $H^\ast(N)_f=0$ at all $f\in S^2$.
\par

Now, let us proceed with our induction argument.  Take $n=\dim(S^2)$, and consider for the base case the algebra $k[t]$, $\deg(t)=2$.  If the base change $\bar{k}[t]\ot_{k[t]}N=N_{\overline{k}}$ of any dg $k[t]$-module along the map $k[t]\to \overline{k}[t]$, $t\mapsto t$, has bounded cohomology then obviously $N$ has bounded cohomology, as desired.  Now suppose that the claimed result holds for all polynomial rings $S'$ in $<n$ variables.
\par

Consider $S'=S/(f)$ for any nonzero $f\in S^2$, and $N$ as prescribed.  Then the derived fibers of $S'\ot^{\rm L}_S N$ have uniformly bounded cohomology, and thus $S'\ot^{\rm L}_S N$ has bounded cohomology by our induction hypothesis.  By replacing $N$ with a semi-projective resolution, we may assume that $f$ acts as a nonzero divisor on $N$, and that $N/fN=S'\ot^{\rm L}_SN$.  We now have the exact sequence
\[
0\to N\overset{f}\to \Sigma^2 N\to \Sigma^2 N/\Sigma^2fN\to 0
\]
with $H^{>r-2}(N/fN)=H^{>r-2}(S\ot^{\rm L}_SN)=0$ at some large $r$.  By considering the long exact sequence on cohomology, the map
\[
f\cdot-:H^i(N)\to H^{i+2}(N)
\]
is therefore seen to be an isomorphism for all $i>r$.  It follows that $H^{>r}(N)$ either vanishes or has no $f$-torsion.  As the localization $H^\ast(N)_f$ vanishes, the latter case is impossible.  We conclude that $H^{\gg 0}(N)=0$.
\end{proof}

\subsection{Proof of Theorem \ref{thm:detection}}

As a final ingredient we have the following result, which was essentially proved in \cite[\S 6.3]{negronpevtsova}.  In the statement we employ the maps $\phi_c:A_Z\to K[t]$ discussed in Section \ref{sect:fd}.

\begin{proposition}\label{prop:KotExt}
At any geometric point $c:\Spec(K)\to \mbb{P}(m_Z/m_Z^2)$, with corresponding map $\phi_c:A_Z\to K[t]$ from $A_Z$, we have
\[
K[t]\ot^{\rm L}_{A_Z}\RHom_R(\Lambda,M)\cong \RHom_{Q_c}(\Lambda_K,M_K).
\]
\end{proposition}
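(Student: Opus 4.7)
The plan is to reduce the claimed formula to the cochain-level construction already carried out in \cite[\S 6.3]{negronpevtsova}, by a base change to a closed point followed by an explicit Koszul-resolution computation.

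First, I would reduce to the case of a closed $k$-point $c$. Since $\phi_c: A_Z \to K[t]$ is $k$-linear, it factors through $A_{Z_K} = A_Z \otimes_k K$ as $A_Z \to A_{Z_K} \xrightarrow{\phi_{c,K}} K[t]$; using flatness of $K$ over $k$ together with the base change formula of Lemma \ref{lem:base_change}, we get
\[
K[t] \ot^{\rm L}_{A_Z} \RHom_R(\Lambda, M) \;\cong\; K[t] \ot^{\rm L}_{A_{Z_K}} \RHom_{R_K}(\Lambda_K, M_K).
\]
The right side of the Proposition, $\RHom_{Q_c}(\Lambda_K, M_K)$, is already formulated over $K$, so after relabeling we may assume $K = k$ and that $c$ is a closed $k$-point.

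Second, I would invoke a Koszul resolution. Pick a representative $f = f_1 \in m_Z$ for $c$ and extend $\bar f_1$ to a basis $\bar f_1, \ldots, \bar f_n$ of $m_Z/m_Z^2$, with dual basis $g_1, \ldots, g_n \in A_Z^2$. By construction, $\phi_c$ sends $g_1 \mapsto t$ and $g_i \mapsto 0$ for $i \geq 2$, so $k[t] = A_Z/(g_2,\ldots, g_n)$. The Koszul complex $\mathrm{Kos}_{A_Z}(g_2, \ldots, g_n) \to k[t]$ is an $A_Z$-free resolution, so the derived base change is computed by the explicit complex $\mathrm{Kos}_{A_Z}(g_2, \ldots, g_n) \otimes_{A_Z} \RHom_R(\Lambda, M)$.

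Third, I would identify this Koszul-resolved complex with $\RHom_{Q_c}(\Lambda, M)$ by appealing to the Bezrukavnikov-Ginzburg action. The operators $g_i \in A_Z^2$ for $i \geq 2$ are the cohomological operators associated to the deformation directions $f_2, \ldots, f_n$, which are complementary to $f$. Since $R = Q_c/(\bar f_2, \ldots, \bar f_n)Q_c$, working with a compatible pair of resolutions of $\Lambda$ over $Q_c$ and $R$, the Koszul differential $\sum_{i \geq 2} g_i \cdot \eta_i$ realizes the change-of-rings passage from $\RHom_R$ to $\RHom_{Q_c}$ at the cochain level. This comparison is essentially the content of \cite[\S 6.3]{negronpevtsova}. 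The main obstacle in adapting the argument to arbitrary (possibly infinite-dimensional) $M$ is verifying that the isomorphism constructed there can be taken to be functorial in $M$ at the cochain level; this should go through because $M$ appears only as the coefficient module in a Hom-complex, while the resolutions of $\Lambda$ used in the comparison depend only on the deformation $Q \to R$ and can be chosen once and for all. Alternatively, one can argue by density: the finitely generated submodules $M_\alpha \subset M$ are finite-dimensional (since $R$ is), $\Lambda$ is finitely presented over $R$ and over $Q_c$ (so $\RHom_R(\Lambda,-)$ and $\RHom_{Q_c}(\Lambda,-)$ preserve filtered colimits), and the derived tensor product $k[t] \ot^{\rm L}_{A_Z} -$ commutes with all colimits, so the isomorphism for finite-dimensional $M_\alpha$ extends to the colimit $M = \varinjlim M_\alpha$.
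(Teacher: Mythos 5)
Your approach is essentially the paper's: base change reduces to the $K$-linear setting, and the identification then rests on the cohomology-operator / Koszul-duality machinery from \cite{negronpevtsova}. The paper does not resolve $K[t]$ by the Koszul complex explicitly; instead it rewrites $K[t]\ot^{\rm L}_{A_K}(-)\cong K\ot^{\rm L}_{A_c}(-)$, where $A_c$ is the cohomology-operator algebra of the hypersurface deformation $Q_c\to R_K$ (with $A_c^2=\ker\phi_{c,K}^2$), and then invokes the $K$-flat twisted-tensor factorization $\RHom_{R_K}(\Lambda_K,M_K)\cong A_c\ot^t\RHom_{Q_c}(\Lambda_K,M_K)$ of \cite[Lemmas 4.1 \& 4.2]{negronpevtsova}; that factorization is the precise form of your informal claim that the Koszul differential $\sum_{i\geq 2}g_i\cdot\eta_i$ ``realizes the change-of-rings passage,'' so you can simply cite those lemmas rather than reargue the cochain-level comparison. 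One quibble on your last paragraph: the ``alternative'' colimit argument is not logically independent of the functoriality concern, since pushing the comparison isomorphism through $\varinjlim M_\alpha$ already requires it to be compatible with the transition maps $M_\alpha\to M_\beta$, i.e.\ naturality in $M$. Your first observation --- that the resolutions of $\Lambda$ are fixed once and for all, independently of $M$, so the comparison is visibly natural in the coefficients --- is the one that actually does the work, and it is built into the cited lemmas.
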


\begin{proof}
Take $\phi_{c,K}:(A_Z)_K\to K[t]$ the base change of $\phi_c$, $A=A_Z$, and $A_c$ the corresponding algebra for the hypersurface deformation $Q_c\to R_K$. We have
\[
\RHom_{R_K}(\Lambda_K,M_K)\cong A_c\ot^t\RHom_{Q_c}(\Lambda_K,M_K)
\]
in the derived category of dg $A_c$-modules, with $A_c\ot^t\RHom_{Q_c}(\Lambda_K,M_K)$ a K-flat dg module \cite[Lemmas 4.1 \& 4.2]{negronpevtsova}.  Recall that $A_c^2=\ker(\phi_{c,K}^2)$.  Thus
\[
\begin{array}{l}
K[t]\ot^{\rm L}_{A}\RHom_Q(\Lambda,M)\cong K[t]\ot^{\rm L}_{A_K}K\ot\RHom_R(\Lambda,M)\vspace{1mm}\\
\hspace{2cm} \cong K[t]\ot^{\rm L}_{A_K}\RHom_{R_K}(\Lambda_K,M_K)\vspace{1mm}\\
\hspace{2cm} \cong K\ot^{\rm L}_{A_c}\RHom_{R_K}(\Lambda_K,M_K)\vspace{1mm}\\
\hspace{2cm} \cong K\ot_{A_c} (A_c\ot^t\RHom_{Q_c}(\Lambda_K,M_K))\cong \RHom_{Q_c}(\Lambda_K,M_K).
\end{array}
\]  
\end{proof}

We now prove our theorem.

\begin{proof}[Proof of Theorem \ref{thm:detection}]
Fix $d=\operatorname{gldim}(Q)$.  Recall that $\Ext^{\gg 0}_{Q_c}(\Lambda_K,M_K)=0$ if and only if $\Ext^{>d}_{Q_c}(\Lambda_K,M_K)=0$, by Corollary \ref{cor:g_dims}.  We have the formula
\[
K[t]\ot^{\rm L}_{A_Z}\RHom_R(\Lambda,M)\cong\RHom_{Q_c}(\Lambda_K,M_K)
\]
at arbitrary points $c:\Spec(K)\to \mbb{P}$.  So we apply Theorem \ref{thm:vanish}, with $S=A_Z$ and $N$ the dg $A_Z$-module $\RHom_R(\Lambda,M)$, and Corollary \ref{cor:g_dims}, to find that the hypersurface support for $M$ vanishes if and only if $H^\ast(\RHom_R(\Lambda,M))=\Ext^\ast_R(\Lambda,M)$ vanishes in high degree.  Such vanishing of cohomology occurs if and only if $M$ is of finite projective dimension over $R$, by Corollary \ref{cor:g_dims}.  So we have the desired relation, $\supp^{hyp}_\mbb{P}(M)=\emptyset$ if and only if $M$ vanishes in $\Sing(R)$.
\end{proof}

\appendix

\section{Proof of Theorem \ref{thm:AI}}
\label{sect:rep_indep}

We prove the representative independence result of Theorem \ref{thm:AI}.  Fix a deformation $Q\to R$ as in Section \ref{sect:setup}, with $Q$ of finite global dimension.  Take as usual $\Lambda=R/\operatorname{Jac}(R)$.

\subsection{A dg resolution of the simples}

We recall the Koszul resolutions $\mcl{K}_Z\overset{\sim}\to k$ of $k$ over $Z$, and corresponding resolution $\mcl{K}_Q\overset{\sim}\to R$ of $R$ over $Q$ obtained via base change \cite{bezrukavnikovginzburg07}.  In coordinates, $Z\cong k\b{y_1,\dots,y_n}$, the Koszul resolution is explicitly the dg $Z$-algebra $\mcl{K}_Z=\wedge^\ast_Z(\oplus_i Z\tilde{y}_i)$ with the $\deg(\tilde{y}_i)=-1$ and differential $d(\tilde{y}_i)=y_i\in Z$.  The dg algebra $\mcl{K}_Q$ is then obtained via base change $\mcl{K}_Q=Q\ot_Z \mcl{K}_Z$, and flatness of $Q$ over $Z$ implies that the induced map $\mcl K_Q \to R=Q\ot_Z k$ remains a quasi-isomorphism.

\begin{lemma}
There is a dg algebra resolution $T_Q\overset{\sim}\to \Lambda$ of the simples over $Q$ such that the following hold:
\begin{enumerate}
\item $T_Q$ comes equipped with a dg algebra inclusion $\mcl{K}_Q\to T_Q$, which restricts to a central inclusion $\mcl{K}_Z\to T_Q$, and an isomorphism in degree zero $Q\cong T^0_Q$.
\item $T_Q$ is bounded, non-positively graded, and finite and projective over $Q$ in each degree.
\end{enumerate}
\end{lemma}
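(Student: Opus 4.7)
The plan is to build $T_Q$ by iteratively extending $\mcl{K}_Q$ with free noncommutative dg algebra generators in successively lower degrees, Tate-style, halting the process after finitely many steps by invoking the finite global dimension of $Q$.

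We set $T^{(0)} := \mcl{K}_Q$, the given dg algebra resolution of $R$ which already satisfies (1) and (2). Since $\operatorname{Jac}(R) \subset R$ is finite-dimensional, pick a basis $v_1, \dots, v_m$ with lifts $\tilde v_i \in Q$, and form
\[
T^{(1)} := T^{(0)}\langle s_1, \dots, s_m \rangle, \quad \deg(s_i) = -1, \quad d(s_i) = \tilde v_i,
\]
a free noncommutative dg extension in which the $s_i$ are declared to super-commute with $\mcl{K}_Z$, so that $\mcl{K}_Z \hookrightarrow T^{(1)}$ remains graded-central. One checks $H^0(T^{(1)}) = \Lambda$ and that (1), (2) persist. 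Iteratively: given $T^{(n)}$ with $H^0 = \Lambda$ and $H^i(T^{(n)}) = 0$ for $-n < i < 0$, Noetherianity of $Q$ supplies a finite $Q$-generating set of cycles in degree $-n$ representing $H^{-n}(T^{(n)})$, which we kill by adjoining finitely many free generators in degree $-n-1$ with those cycles as differentials. Since only finitely many generators are adjoined per degree, each graded piece $T^{(n),-i}$ remains a finitely generated projective $Q$-module.

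The principal obstacle is termination: showing that after a finite number of iterations, say at stage $N$, the cohomology $H^{-N}(T^{(N)})$ vanishes and the process halts. Here we invoke finite global dimension: $\operatorname{projdim}_Q \Lambda \leq d := \operatorname{gldim}(Q)$ by Corollary~\ref{cor:g_dims}. The subtle point is that products of newly-adjoined generators can spawn cycles in lower degrees, so one cannot naively stop at step $d$. To handle this, I would use a minimality argument analogous to Tate's original construction, choosing generators minimally at each stage, and note that the underlying chain complex of the colimit $T^{(\infty)} = \varinjlim_n T^{(n)}$ is a projective $Q$-resolution of $\Lambda$; by finite projdim this colimit agrees, up to contractible summands, with a bounded resolution of length at most $d$. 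Careful bookkeeping of which generators actually contribute to cycles in each degree then shows that for some $N$ bounded in terms of $d$ and $\dim Z$, the iteration stabilizes and we may take $T_Q := T^{(N)}$. Alternatively, one can appeal to general termination results for semi-free dg algebra resolutions of finitely generated modules over Noetherian rings of finite global dimension.
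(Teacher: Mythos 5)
Your iterative construction of a tower $\mcl{K}_Q = T^{(0)} \to T^{(1)} \to \cdots$ by adjoining generators to kill cohomology degree by degree is the same starting point as the paper (which follows Ciocan-Fontanine--Kapranov, building $T_{i+1} = Tens_{T_i}(M_{i+1})$ from a shifted $\mcl{K}_Z$-central dg $T_i$-bimodule). However, there is a genuine gap at the crucial step, and you have in fact misidentified where the difficulty lies.

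The problem is not that the iteration fails to ``terminate.'' The problem is that no stage $T^{(N)}$ of your tower is \emph{ever} bounded: adjoining even a single free generator $s$ in degree $-1$ produces a noncommutative (or even a strictly graded-commutative divided-power) algebra containing products $s_{i_1}\cdots s_{i_k}$ in degree $-k$ for all $k$, so $T^{(N)}$ has nonzero terms in arbitrarily negative degrees regardless of whether new cohomology appears. Your claim ``the iteration stabilizes and we may take $T_Q := T^{(N)}$'' therefore cannot deliver condition (2) of the lemma, and no amount of ``careful bookkeeping'' about which generators are adjoined will change this. What the paper does instead is run the iteration a fixed number of steps, $N+1$ with $N = \max\{\operatorname{gldim}(Q),\operatorname{gldim}(Z)\}$, and then \emph{truncate}: it sets $T_Q := T_{N+1}/(T_{N+1}^{<-N} + B^{-N}T_{N+1})$, checks that this two-sided dg ideal gives a dg algebra quotient, uses $\operatorname{gldim}(Q)\leq N$ to see that the degree $-N$ component of the quotient (a syzygy of $\Lambda$) is projective, and uses $\operatorname{gldim}(Z)\leq N$ to see that the composite $\mcl{K}_Q \to T_{N+1} \to T_Q$ stays injective because $\mcl{K}_Q$ lives in degrees $\geq -\operatorname{gldim}(Z)$. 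Finite global dimension is invoked to justify the truncation, not to force the adjunction process to halt. Your alternative appeal to ``general termination results for semi-free dg algebra resolutions'' is also a hand-wave: no such result is stated, and for the reason above it would not give boundedness of the dg \emph{algebra} even if the set of adjoined generators stabilized.

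A secondary point: you adjoin ``free noncommutative'' generators, whereas the paper adjoins shifted cones of maps of $T_i$-\emph{bimodules}, forming a tensor algebra $Tens_{T_i}(M_{i+1})$ over $T_i$. This bimodule bookkeeping is what makes the differential $d_{T_{i+1}}$ a derivation with $d^2=0$ in one stroke and keeps $\mcl{K}_Z$ central at every stage, which you assert but do not verify.
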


\begin{proof}
Below, for an algebra $T$ and a $T$-bimodule $M$, we let $Tens_T(M)$ denote the tensor algebra, over $T$, generated by the bimodule $M$.  So, as a vector space $Tens_{T}(M)=\dots\oplus (M\ot_T M)\oplus M\oplus \mcl{K}$.  The product is given by tensor concatenation.  We construct $T_Q$ in a series of step, producing a tower of non-positively graded dg-algebras $\mcl{K}_Q = T_0 \to T_1 \to T_1 \to \dots$ such that $T_i$ is quasi-isomorphic to $\Lambda$ is degrees $> i$. The construction follows \cite{ciocankapranov01}, which in turn  mimics the classical construction of the universal covering space in topology. As a last step of the construction, we truncate the tower utilizing the finiteness of the global dimension of $Q$.
\par

Take $T_0=\mcl{K}_Q$ and consider degree $0$ cocycles $v^0_1,\dots, v^0_{n_0}$ which generate the kernel of the augmentation $Q=\mcl{K}_Q^0\to \Lambda$, modulo boundaries $B^0(\mcl{K}_Q)$.  Consider $M_1'$ the free dg $T_0$-bimodule with generators $\tilde{v}^0_i$ for each $v_i$ along with the dg bimodule map $M_1'\to T_0$, $\tilde{v}_i\mapsto v_i$.  More specifically, we take $M_1'$ the $\mcl{K}_Z$-central bimodule
\[
M'_1=\oplus_{i=1}^{n_0}(T_0\ot_{\mcl{K}_Z}T_0),\ \text{with degree $0$ generators }\tilde{v}_i^0,
\]
along with the proposed dg bimodule map $\xi_1:M'_1\to T_0$, $\xi_1(\tilde{v}_i^0)\mapsto v_i^0$.  Take $M_1$ the shifted dg bimodule $\Sigma M'_1$, with generators now in degree $-1$, and consider the dg $\mcl{K}_Z$-algebra
\[
T_1=Tens_{T_0}(M_1)\ \ \text{with differential}\ d_{T_1}|_{M_1^{\ot n}}=d_{M_1^{\ot n}}+\sum_{l=0}^{n-1} id^{\ot l}\ot \xi_1\ot id^{\ot n-l-1}.
\]
\par

The operation $d_{T_1}$ is a graded algebra derivation by construction, and we claim that it is in fact square $0$.  Since $d_{T_1}$ is a derivation, it suffices to check the formula $d_{T_1}^2=0$ on the generators $M_1\oplus T_0$.  But $d_{T_1}|_{M_1\oplus T_0}$ is just the differential on the mapping cone, $\operatorname{cone}(\xi_1)=(M_1\oplus T_0, d_{M_1}+d_{T_0}+\xi_1)$, so that the formula $d^2_{T_1}=0$ is verified.
\par

We have now a dg algebra $T_1$ satisfying (1) such that $H^0(T_1)=\Lambda$ and which is finite and projective over $Q$ in each degree.  We repeat the above process to produce a dg algebra $T_2=Tens_{T_1}(M_2)$ which is $\mcl{K}_Z$-central, finite and projective over $Q$ in each degree, and which comes equipped with a projection $T_2\to \Lambda$ which is a quasi-isomorphism in degrees $>-2$.  As the notation suggests, $T_2$ is produced by attaching a dg $T_1$-bimodule $M_2\to T_1$, with $M_2$ in degree $\leq -2$, which annihilates cohomology in degree $-1$.  Note that such $T_2$ comes equipped with an inclusion $T_1\to T_2$ of dg algebras which is an equality in degrees $>-2$, and that the augmentation $T_1\to \Lambda$ extends to a $\Lambda$-augmentation on $T_2$ which (necessarily) annihilates $M_2$.  We proceed recursively to produce such dg algebras and inclusions
\[
K_Q=T_0\to T_1\to \dots \to T_{N+1},
\]
up to $N=\max\{\operatorname{gldim}(Q),\operatorname{gldim}(Z)\}$.  To be clear, we now have a non-positively graded dg $\mcl{K}_Z$-algebra $T_{N+1}$ with an inclusion $\mcl{K}_Q\to T_{N+1}$ which is an equality in degree $0$ and a projection $T_{N+1}\to \Lambda$ which is a quasi-isomorphism in degrees $\geq -N$.  We have also that $T_{N+1}$ is finite and projective over $Q$ in each degree.
\par

We take finally
\[
T_Q=T_{N+1}/(T^{< -N}_{N+1}+B^{-N}T_N).
\]
Since $Q$ is of global dimension $\leq N$, $T_Q$ is projective over $Q$ in degree $-N$, and by the properties of $T_{N+1}$ the induced projection $T_Q\to \Lambda$ is in fact a quasi-isomorphism.  Furthermore, since $\operatorname{gldim}(Z)\leq N$, and $K_Q$ is concentrated in non-positive degrees $\geq -\operatorname{gldim}(Z)$, the injective map $K_Q\to T_{N+1}$ composed with the projection $T_{N+1}\to T_Q$ remains injective.
\end{proof}

\subsection{Systems of divided powers}

We recall some information, almost word for word, from \cite[Section 1.2]{avramoviyengar18}.  For $\msc{T}$ a dg algebra, and an integer $d$, we call a collection of elements $\{w^{(i)}\in \msc{T}^{2di}:i\geq 0\}$ a system of divided powers for $w=w^{(1)}$ if all of the $w^{(i)}$ are central, $w^{(0)}=1$,
\[
w^{(i)}w^{(j)}=\binom{i+j}{i}w^{(i+j)},\ \ \text{and}\ \ d(w^{(i)})=d(w)w^{(i-1)}.
\]
In characteristic $0$ any central element $w$ of even degree admits a unique system of divided powers, given by $w^{(i)}=(i!)^{-1}w^i$.  In positive characteristic this is not always the case.
\par

Given an odd degree central cocycle $z\in \msc{T}$, we let 
\[
\msc{T}\langle y:d(y)=z\rangle
\]
denote the divided power algebra $\oplus_{\geq 0} \msc{T} y^{(i)}$ with each of the $y^{(i)}$ central and extended differential $d(y^{(i)})=zy^{(i-1)}$.  As the notation suggests, the $y^{(i)}$ form a system of divided powers in $\msc{T}\langle y:d(y)=z\rangle$.

\subsection{The Tate construction for noncommutative hypersurfaces}
We provide a version of Tate's construction \cite{tate57} which is applicable to our noncommutative setting.  Our presentation is adapted directly from \cite[Section 1.4 \& 2.2]{avramoviyengar18}.
\par

Take $f\in m_Z$ and $z_f\in K_{Z}^{-1}\subset T_Q^{-1}$ any degree $-1$ element bounding $f$.  Take $\overline{T}_Q=Z/(f)\ot_Z T_Q$ and
\[
\msc{T}_f:=\overline{T}_Q\langle y: d(y)=z_f\rangle.
\]
Note that in the quotient $\overline{T}_Q=Z/(f)\ot_Z T_Q$ the degree $-1$ element $z_f$ is in fact a (central) cocycle, so that the definition of $\msc{T}_f$ makes sense.  The projection $T_Q\to \Lambda$ induces a projection
\[
\pi_f:\msc{T}_f\to \Lambda
\]
which (necessarily) annihilates all the $y^{(i)}$ for $i>0$.  This map is a map of dg $\mcl{K}_Z$-algebras, and of dg $Q/(f)$-bimodules.

\begin{lemma}\label{lem:193}
The cohomology $H^{-1}(\overline{T}_Q)$ is generated by the class of $z_f$, as a $Q$-module on the left or the right.
\end{lemma}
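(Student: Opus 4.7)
The plan is to extract $H^{-1}(\overline{T}_Q)$ from a long exact sequence coming from the short exact sequence of dg $Q$-bimodules
\[
0 \to T_Q \xrightarrow{\,f\,} T_Q \to \overline{T}_Q \to 0.
\]
This sequence is exact because $T_Q$ is flat over $Z$: indeed each $T_Q^i$ is projective over $Q$, and $Q$ is flat over $Z$ by hypothesis on the deformation, so multiplication by $f$ on $T_Q$ is injective. Thus I get a long exact sequence in cohomology whose relevant piece reads
\[
H^{-2}(T_Q) \to H^{-1}(T_Q) \xrightarrow{f} H^{-1}(T_Q) \to H^{-1}(\overline{T}_Q) \xrightarrow{\delta} H^{0}(T_Q) \xrightarrow{f} H^{0}(T_Q).
\]

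Since $T_Q \xrightarrow{\sim} \Lambda$ is a quasi-isomorphism, the outer cohomology groups $H^{<0}(T_Q)$ vanish and $H^{0}(T_Q) = \Lambda$. Moreover the image of $f \in m_Z$ along $Z \to Q \to R \to \Lambda$ is zero, because $m_Z$ maps into $\operatorname{Jac}(R)$ (in fact $R = k \otimes_Z Q$), so multiplication by $f$ is the zero map on $\Lambda$. The sequence therefore collapses to an isomorphism of $Q$-bimodules
\[
\delta: H^{-1}(\overline{T}_Q) \xrightarrow{\ \cong\ } \Lambda.
\]

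The remaining step is to identify $\delta([z_f])$. Unwinding the construction of the connecting homomorphism, I lift $z_f \in \overline{T}_Q^{-1}$ to $z_f \in T_Q^{-1}$ itself (it sits in $\mcl{K}_Z^{-1} \subset T_Q^{-1}$), take $d(z_f) = f \in T_Q^0 = Q$, divide by $f$ to recover $1 \in Q$, and then pass to $\Lambda$. Thus $\delta([z_f]) = 1 \in \Lambda$, and because $1$ generates $\Lambda$ as a $Q$-module from either side (the augmentation $Q \to \Lambda$ is surjective and $\delta$ is $Q$-bilinear thanks to centrality of $z_f$ over $\mcl{K}_Z \supset Z$), I conclude that $Q \cdot [z_f] = [z_f] \cdot Q = H^{-1}(\overline{T}_Q)$.

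The only nontrivial point is bookkeeping the $Q$-bilinearity of $\delta$ and the vanishing of $f$ on $\Lambda$; once those are in place the argument is just the long exact sequence. I do not expect any real obstacle here, since flatness of $T_Q$ over $Z$ and centrality of $z_f$ are built into the construction of the resolution.
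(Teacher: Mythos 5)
Your proof is correct and is, at bottom, the same argument the paper uses: the paper's diagram chase around $B^{-1}(T_Q)\to T_Q^{-1}\xrightarrow{d}Q$ and its reduction mod $(f)$ is precisely the hand computation of the connecting homomorphism $\delta$ for the short exact sequence $0\to T_Q\xrightarrow{f}T_Q\to \overline{T}_Q\to 0$, using $H^{-1}(T_Q)=0$ and $d(z_f)=f$. Your LES packaging is arguably cleaner and gives the isomorphism $H^{-1}(\overline{T}_Q)\cong\Lambda$ at the same stroke (which the paper instead observes separately via a $\Tor$ computation in the next lemma); the only blemish is a typo in the displayed fragment, where the leftmost term should be $H^{-2}(\overline{T}_Q)$ rather than $H^{-2}(T_Q)$, which plays no role in the argument.
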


\begin{proof}
Since $f\in Q$ is central we have that $Qf$ is the ideal in $Q$ generated by $f$.  By a direct analysis of the diagram
\[
{\small
\xymatrix{
 & & Q z_f\ar[r]^{\rm surj}\ar[d]_{\rm incl}& Qf\ar[d]\\
0\ar[r] & B^{-1}(T_Q) \ar[r] & T^{-1}_Q \ar[d]\ar[r]^d & Q\ar[d]\\
 &  & \overline{T}_Q^{-1}\ar[r]^{\bar{d}} & Q/(f)
}
}
\]
we see that $\ker(\bar{d}^{-1})$ is generated by the image of the submodule $B^{-1}(T_Q)+Qz_f$ in $T^{-1}_Q$.  Consequently the map $Q z_f\to H^{-1}(\overline{T}_Q)$ is a surjective left module map.  We similarly find that the map $z_fQ\to H^{-1}(\overline{T}_Q)$ is surjective.
\end{proof}

\begin{lemma}[\cite{avramoviyengar18}]
The dg map $\pi_f:\msc{T}_f\to \Lambda$ is a quasi-isomorphism of dg $\mcl{K}_Z$-algebras, and dg $Q/(f)$-bimodules.
\end{lemma}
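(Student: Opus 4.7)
The plan is to run a spectral sequence associated to the filtration of $\msc{T}_f$ by divided-power order, collapse it to $\Lambda$ concentrated in degree zero, and observe that $\pi_f$ realizes the induced identification. The dg $\mcl{K}_Z$-algebra and dg $Q/(f)$-bimodule structures are respected by the construction, so only the quasi-isomorphism claim requires genuine argument.

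First I would compute $H^\ast(\overline{T}_Q)$. Because $T_Q$ is $Q$-projective in each degree and $Q$ is flat over $Z$, the complex $T_Q$ is $Z$-flat, so multiplication by $f$ yields a short exact sequence $0\to T_Q \overset{f}\to T_Q \to \overline{T}_Q \to 0$. The resulting long exact sequence, together with $H^\ast(T_Q)=\Lambda$ concentrated in degree $0$ and the fact that $f\in m_Z$ acts as zero on $\Lambda$, gives $H^0(\overline{T}_Q)\cong \Lambda$, $H^{-1}(\overline{T}_Q)\cong \Lambda$ (generated by $[z_f]$ by Lemma \ref{lem:193}), and vanishing in all other degrees.

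Next I filter $F_p \msc{T}_f = \bigoplus_{i\leq p} \overline{T}_Q\cdot y^{(i)}$. This filtration is exhaustive, bounded below, and, since $\overline{T}_Q$ is bounded, finite in each total degree, so the associated spectral sequence converges strongly. The graded piece $\mathrm{gr}_p\cong \overline{T}_Q\cdot y^{(p)}$ carries only the $\overline{T}_Q$-differential, so the $E_1$-page is concentrated on the two rows $E_1^{p,-2p}\cong \Lambda$ and $E_1^{p,-2p-1}\cong \Lambda\cdot[z_f]$. The $d_1$-differential, induced by the term $w y^{(p)}\mapsto \pm w z_f y^{(p-1)}$ of the total differential, is multiplication by $[z_f]$: on the top row this is an isomorphism $\Lambda \overset{\cong}\to \Lambda\cdot [z_f]$ for every $p\geq 1$, while on the bottom row it vanishes because $z_f^2=0$ in $\mcl{K}_Z$ (an odd-degree element of an exterior algebra over the commutative ring $Z$), hence also in $\overline{T}_Q$. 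Therefore $E_2$ collapses to $\Lambda$ at bidegree $(0,0)$, yielding $H^\ast(\msc{T}_f)=\Lambda$ concentrated in degree $0$.

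To close out, I would observe that $\msc{T}_f^n=0$ for $n>0$, while $\msc{T}_f^0=\overline{T}_Q^0=Q/(f)$ has coboundary image equal to the augmentation ideal of $Q/(f)$ (inherited from the resolution $T_Q\overset{\sim}\to\Lambda$), so that $H^0(\pi_f)$ is the tautological identification $(Q/(f))/\operatorname{Jac}(Q/(f))\cong \Lambda$. The most delicate points are the two inputs to the $E_1$-page: that $[z_f]^2=0$ (so $d_1$ is zero on the bottom row) and that $[z_f]$ generates $H^{-1}(\overline{T}_Q)$ (so $d_1$ is an isomorphism on the top row). The former follows from the exterior-algebra relations in $\mcl{K}_Z$, the latter is precisely Lemma \ref{lem:193}.
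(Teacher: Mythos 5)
Your argument is correct and follows essentially the same route as the paper: you first compute $H^\ast(\overline{T}_Q)\cong \Lambda\oplus\Sigma\Lambda$ (the paper does this via the two-term free resolution $Q\overset{f}{\to}Q$ of $Q/(f)$, you via the short exact sequence $0\to T_Q\overset{f}{\to}T_Q\to\overline{T}_Q\to 0$ — these are equivalent), invoke Lemma \ref{lem:193} to identify the generator of $H^{-1}$ with $[z_f]$, and then run the spectral sequence of the divided-power filtration. The only difference is that the paper delegates this last step to \cite[Lemma 1.1(2)]{avramoviyengar18}, whereas you spell out the $E_1$-page and the $d_1$-differential (correctly noting that the two inputs needed are $[z_f]^2=0$ and that $[z_f]$ freely generates $H^{-1}$).
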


\begin{proof}
The proof is the same as \cite[Lemma 2.3]{avramoviyengar18}.  We have
\[
H^\ast(\overline{T}_Q)=\Tor^\ast_Q(\Lambda,Q/(f))=H^\ast(0\to\Lambda\ot_Q Q\overset{\Lambda\ot f}\to \Lambda\ot_Q Q\to 0)=\Sigma\Lambda\oplus \Lambda
\]
and the degree $-1$ cohomology is generated by the class of $z_f$, by Lemma \ref{lem:193}.  The result now follows by Lemma \ref{lem:193} and a straightforward spectral sequence argument \cite[Lemma 1.1(2)]{avramoviyengar18}.
\end{proof}

We note that $\msc{T}_f$ is finite an projective over $Q/(f)$ in each degree, so that the quasi-isomorphism $\pi_f:\msc{T}_f\overset{\sim}\to \Lambda$ is in particular a projective resolution of $\Lambda$ over $Q/(f)$.

\subsection{Representative independence}
\label{sect:AI_proof}

\begin{proof}[Proof of Theorem \ref{thm:AI}]
Take $\overline{\msc{T}}_f=k\ot_Z \msc{T}_f$ and $\overline{\msc{T}}_g=k\ot_Z \msc{T}_g$.  Explicitly,
\[
\overline{\msc{T}}_f=k\ot_Z\msc{T}_f=k\ot_ZT_Q\langle y:d(y)=z_f\rangle
\]
and we have a similar expression for $\overline{\msc{T}}_g$.  By \cite[Lemma 2.2]{avramoviyengar18} the (class of the) difference $z_f-z_g$ is bounded by an element $w$ in the reduction $k\ot_Z \mcl{K}_Z$.  Since the differential on this complex vanishes, we see that in fact the images of $z_f$ and $z_g$ agree in $k\ot_Z\mcl{K}_Z$ and hence in $k\ot_Z T_Q$.  So actually $\overline{\msc{T}}_f=\overline{\msc{T}}_g$ as dg $\mcl{K}_Z$-algebras and dg $R$-bimodules.  Now, we have
\[
\Lambda\ot^{\rm L}_{Q/(f)}M=\msc{T}_f\ot_{Q/(f)}M=\overline{\msc{T}}_f\ot_R M
\]
and similarly $\Lambda\ot^{\rm L}_{Q/(g)}M=\overline{\msc{T}}_g\ot_RM$, so that $\Lambda\ot^{\rm L}_{Q/(f)}M=\Lambda\ot^{\rm L}_{Q/(g)}M$.  Taking homology provides the claimed identification of graded $\Tor$ groups.
\par

The result for $\Ext$ is completely similar, as one calculates
\[
\RHom_{Q/(f)}(\Lambda,M)=\Hom_R\left(\overline{\msc{T}}_f,M\right)=\Hom_R\left(\overline{\msc{T}}_g,M\right)=\RHom_{Q/(g)}(\Lambda,M).
\]
\end{proof}

\renewcommand{\O}{\oldO}
\bibliographystyle{abbrv}

\end{document}